\begin{document}
 \title{On the continuous time limit of Ensemble Square Root Filters\thanks{December 01, 2020}}


          \author{Theresa Lange\thanks{Institut f\"ur Mathematik, Technische Universit\"at Berlin, Stra{\ss}e des 17. Juni 136, D-10623 Berlin (tlange@math.tu-berlin.de), \url{https://www.math.tu-berlin.de/fachgebiete_ag_stochfinanz/fg_mathematische_stochastik_stochastische_prozesse_in_den_neurowissenschaften/v_menue/mitarbeiter/theresa_lange/}}
          \and Wilhelm Stannat\thanks{Institut f\"ur Mathematik, Technische Universit\"at Berlin, Stra{\ss}e des 17. Juni 136, D-10623 Berlin and Bernstein Center for Computational Neuroscience, Philippstr. 13, D-10115 Berlin, Germany, (stannat@math.tu-berlin.de).}}

         \pagestyle{myheadings} \markboth{ON THE CONTINUOUS TIME LIMIT OF ESRF}{THERESA LANGE, WILHELM STANNAT} \maketitle

          \begin{abstract}
          We provide a continuous time limit analysis for the class of Ensemble Square Root Filter algorithms with deterministic model perturbations. In the particular linear case, we specify general conditions on the model perturbations implying convergence of the empirical mean and covariance matrix towards their respective counterparts of the Kalman-Bucy Filter. As a second main result we identify additional assumptions for the convergence of the whole ensemble towards solutions of the Ensemble Kalman-Bucy filtering equations introduced in [J. de Wiljes, S. Reich, W. Stannat, SIAM Journal on Applied Dynamical Systems, 17(2): 1152–-1181, 2018]. 
The latter result can be generalized to nonlinear Lipschitz-continuous model operators. A striking implication of our results is the fact that the limiting equations for the ensemble members are universal for a large class of Ensemble Square Root Filters. This yields a mathematically rigorous justification for the analysis of these algorithms with the help of the Ensemble Kalman-Bucy Filter.
               
          \end{abstract}
\begin{keywords}Continuous time limit, Ensemble Square Root Filter, Deterministic model perturbations
\end{keywords}

 \begin{AMS} 60H35, 93E11, 60F99
\end{AMS}
          \section{Introduction.}\label{intro}\\
          Consider the optimal filtering problem in continuous time which consists of estimating the current state of a diffusion process
\begin{equation}\label{contX}
{\rm d}X_t = f\left(X_t\right){\rm d}t + Q^{\frac{1}{2}}{\rm d}W_t, \hspace{0.5cm} X_t \in \mathbb{R}^{d},
\end{equation}
using observations
\begin{equation}\label{contY}
{\rm d}Y_t = g\left(X_t\right){\rm d}t + C^{\frac{1}{2}}{\rm d}V_t, Y_0 = 0,\hspace{0.5cm} Y_t \in \mathbb{R}^{p}.
\end{equation}
The processes $W$ and $V$ are independent standard Brownian motions, $Q$ and $C$ positive definite matrices, and $f$ and $g$ assumed to be Lipschitz-continuous. The solution of this problem is given by the posterior mean $\int x \pi_t({\rm d}x)$, where 
\begin{equation}
\pi_t({\rm d}x) = \mathbb{P}\left[ X_t \in {\rm d}x | \mathcal{Y}_t\right]
\end{equation}
is the conditional distribution of $X_t$ given $\mathcal{Y}_t := \{ Y_s : s \leq t\}$. In the last decades a hoard of algorithms has been proposed to specify or approximate $\pi_t$.\\
\noindent
In practice, however, observations are accumulated discretely in time rather than continuously. In a typical scenario, one observes a sequence of observations $Y_{t_k}, k =1,2,3,...$, where $t_{k+1} = t_k + h$ for some $h>0$, for which one solves the corresponding discrete-time filtering problem.\\
\noindent
In the particular case of linear operators $f(x) = Ax$ and $g(x) = Gx$, this problem can be solved with the famous Kalman Filter \cite{kalman1960} computing the mean $\bar{x}$ and covariance matrix $P$ of the in this case Gaussian $\pi$ according to the following recursion:
given a current estimate $\bar{x}_{k-1}^{a}$, $\bar{x}_{k-1}^{a}$ is propagated forward according to the system equation to yield the forecast $\bar{x}_k^{f}$. If at time $t_k$, an observation $Y_{t_k}$ is available, this will be used to update the current forecast to yield an improved estimate $\bar{x}_k^{a}$. When applied to the continuous-time setting, one uses (in the simplest case) the Euler-Maruyama time-discretization of (\ref{contX}) in the forecast step and updates each forecast using the observations in the form of $\Delta Y_k := Y_{t_k} - Y_{t_{k-1}}$. The precise evolution equations then read as follows:\\
Forecast:
\begin{align}
\bar{x}_k^{f} &= \bar{x}_{k-1}^{a} + hA\bar{x}_{k-1}^{a}\label{KFmeanF}\\
P_k^{f} &= \left({\rm Id} + hA\right)P_{k-1}^{a}\left({\rm Id} +hA\right)^T + Q\label{KFcovF}
\end{align}
Update:
\begin{align}
\bar{x}_k^{a} &= \bar{x}_k^{f} + K_k\left(\Delta Y_k - hG\bar{x}_k^{f}\right)\label{KFmeanA}\\
P_k^{a} &= \left({\rm Id} - hK_kG\right)P_k^{f}\label{KFcovA}\\
K_k &= P_k^{f}G^T\left(C + hGP_k^{f}G^T\right)^{-1}\label{gain}
\end{align}
It is well known that the Kalman Filter admits a continuous-time analogue, the Kalman-Bucy Filter \cite{kalman1961}, given by
\begin{align}
\bar{x}_t &= A\bar{x}_t{\rm d}t + P_tG^TC^{-1}\left({\rm d}Y_t - G\bar{x}_t{\rm d}t\right)\label{KBFmean}\\
{\rm d}P_t &= \left(AP_t + P_tA^T + Q - P_tG^TC^{-1}GP_t\right){\rm d}t.\label{Ricc}
\end{align}
In the nonlinear case, however, calculating the exact $\pi_t$ is in general not possible necessitating approximative schemes. Ensemble Kalman Filters (EnKF) form a class of second-order accurate Monte-Carlo algorithms approximating the conditional mean and covariance matrix with the help of the empirical mean and covariance matrix of an ensemble, and propagating the ensemble according to the nonlinear counterpart of the Kalman filtering equations. In the case of the popular stochastic EnKF (cf. \cite{evensen1994}, \cite{burgers1998}), for instance, each ensemble member is propagated according to
\begin{align}
X_{t_k}^{(i),f} &= X_{t_{k-1}}^{(i),a} + hf\left(X_{t_{k-1}}^{(i),a}\right) + Q^{\frac{1}{2}}\tilde{W}_k^{(i)},\label{StochEnKFF}\\
X_{t_k}^{(i),a} &= X_{t_k}^{(i),f} + K_k\left(\Delta Y_k^{(i)} - hg\left(X_{t_k}^{(i),f}\right)\right), \hspace{0.5cm} \Delta Y_k^{(i)} = \Delta Y_k + C^{\frac{1}{2}}\tilde{V}_k^{(i)},\label{StochEnKFA}
\end{align}
where $\tilde{W}_k^{(i)}, \tilde{V}_k^{(i)} \sim \mathcal{N}\left(0,h{\rm Id}\right)$ are independent samples.\\
\noindent
In our previous work \cite{langeStannat2019}, we were able to show the existence of a continuous time limit $h \to  0$ of (\ref{StochEnKFF}) and (\ref{StochEnKFA}) in the case of $f$ and $g$ being Lipschitz-continuous and bounded. Furthermore, we proved an even stonger convergence result in the case of a modified algorithm inspired by \cite{sakov2008}, replacing $\tilde{W}^{(i)}$ and $\tilde{V}^{(i)}$ by suitable deterministic perturbations. The filter in \cite{sakov2008} is a so called deterministic EnKF and the aim of this paper now is to generalize the latter result to the class of these filtering algorithms, in particular to the class of Ensemble Square Root Filters (ESRF).\\
\noindent
This class of algorithms has been introduced in order to replace the additional noise $\tilde{V}^{(i)}$ to the observations, used in the stochastic EnKF to avoid that the empirical covariance matrix underestimates the true error covariance (cf. \cite{burgers1998}). ESRF are widely used in the geosciences since they were shown to numerically perform better than their stochastic counterpart (see e.g. \cite{tippett2003}, \cite{sun2009}, \cite{nerger2012}). The most popular ESRF algorithms are the Ensemble Adjustment Kalman Filter (EAKF, see \cite{anderson2001}), the Ensemble Transform Kalman Filter (ETKF, see \cite{bishop2001}), and the unperturbed EnKF (Whitaker, Hamill (2002), see \cite{whitaker2002}), as summarized in the survey paper \cite{tippett2003}.\\
\noindent
The idea of the ESRF algorithms is the following: let $E_k^{f} := \left[X_{t_k}^{(i),f} - \bar{x}_k^{f}\right]_{i=1,...,M}$ denote the matrix of forecast deviations such that $P_k^{f} = \frac{1}{M-1}E_k^{f}\left(E_k^{f}\right)^T$ is the forecast covariance matrix. Then, in the case of linear observations, ESRF specify deterministic transformations of $E_k^{f}$ such that the resulting covariance matrix $P_k^{a}$ satisfies the Kalman equation (\ref{KFcovA}) exactly, i.e.
\begin{equation}\label{ESRFupdate}
P_k^{a} \overset{!}{=}\left({\rm Id} - hK_kG\right)P_k^{f}.
\end{equation}
\noindent
This paper is structured as follows: in the particular linear case and under appropriate assumptions on the deterministic model perturbations, we show in Section \ref{SectionConvCovMean} that the ensemble equations lead to closed recursion formulas for the empirical mean and covariance matrix which up to terms of order $h^2$ coincide with their respective Kalman filtering equations. Our main results concerning the convergence of the mean and covariance matrix towards their corresponding counterparts in the Kalman-Bucy Filter are summarized in Theorem \ref{covConvergenceESRF} and Theorem \ref{meanConvergenceESRF}.\\
\noindent
In Section \ref{SectionContLimit}, we then prove for the above three algorithms EAKF, ETKF, and Whitaker, Hamill (2002) the existence of the continuous time limit of the full ensemble $X_{t_k}^{(i),f}$ (respectively $X_{t_k}^{(i),a}$) towards the solution of the Ensemble Kalman-Bucy filtering equations (cf. \cite{bergemann2012}, \cite{reich2011}, \cite{deWiljes2018})
\begin{equation}\label{ContFilterAT}
{\rm d}X_t^{(i)} = AX_t^{(i)}{\rm d}t + Q^{\frac{1}{2}}\hat{W}_t^{(i)}{\rm d} t + P_tG^TC^{-1}\left({\rm d}Y_t - \frac{1}{2}G\left(X_t^{(i)}+\bar{x}_t\right){\rm d}t\right)
\end{equation}
as summarized in Theorem \ref{mainResult}. Our analysis can be generalized to the case of nonlinear, Lipschitz-continuous model operators $f$. It is a striking fact that this continuous-time equation shows up as a universal limit of a broad class of deterministic filtering algorithms. As will be discussed in Section \ref{SectionDiscuss}, this forms a powerful result in view of analyzing properties of the discrete-time counterparts.

\subsection{Notation.}\label{notation}
In the following, we will abbreviate 'deterministic EnKF with deterministic model perturbations' by 'fully deterministic EnKF'. For any vector $x \in \mathbb{R}^n$ and matrix $A \in \mathbb{R}^{n \times m}$ let $x^T$ resp. $A^T$ denote the respective transpose. Further let $\|A \|_F$ denote the Frobenius norm and $\|A\|$ the operator norm of a matrix $A$. Also for a quadratic matrix $A$, let tr$(A)$ denote the trace of $A$.\\
\noindent
In the subsequent analysis we will use the notation $x_t \lesssim y_t$ for $x_t \leq Cy_t$ for some constant $C > 0$ independent of $t$ (e.g. arising from the Cauchy-Schwarz inequality).\\
\noindent
For the ensemble $\left\{X^{(i)}, i,..., M\right\}$ of size $M$ let
\begin{equation}
\bar{x} := \frac{1}{M} \sum_{i=1}^M X^{(i)}
\end{equation}
denote the ensemble mean and
\begin{equation}
P := \frac{1}{M-1}\sum_{i=1}^M \left(X^{(i)} - \bar{x}\right)\left(X^{(i)}-\bar{x}\right)^T
\end{equation}
the ensemble covariance matrix. Further define
\begin{equation}\label{defV}
\mathcal{V} := \frac{1}{M-1} \sum_{i=1}^M \left\|X^{(i)} - \bar{x}\right\|^2 = \text{tr}(P).
\end{equation}
As mentioned above, the discrete-time algorithms are carried out for the partition $0 = t_0 < ... < t_L = T$ with $t_{k+1} = t_k + h$, $h > 0$. In that case, if $t \in [t_k, t_{k+1})$ we use the notation
\begin{equation}
\eta(t) := t_k, \hspace{0.5cm} \nu(t) := k, \hspace{1cm} \eta_+(t) := t_{k+1} \hspace{0.5cm} \nu_+(t) := k+1.
\end{equation}

\section{Deterministic model perturbations.}\label{SectionDetPerturb}\\
\noindent
As described in the introduction, the aim of ESRF algorithms is to transform the forecast ensemble in such a way that the covariance matrix of the resulting ensemble exactly satisfies (\ref{ESRFupdate}) of the Kalman Filter. The proposed transformations are deterministic in the sense that they do not introduce additional noise as in the case of the stochastic EnKF (see \eqref{StochEnKFA}). Observe that this way, ESRF algorithms less likely suffer from sampling errors in the analysis step. Inspired by the modified filter in \cite{deWiljes2018}, we reduce the total amount of noise in the algorithms even further by introducing deterministic model perturbations to replace the noise also in the forecast step. This ansatz can be interpreted as a form of inflation of the ensemble, a technique widely used in ensemble-based algorithms to increase the ensemble spread and prevent underestimation of the covariance matrix. Note that this way the only randomness present in the resulting algorithm comes from the initial ensemble and the observations.\\
In continuous time then, it seems to be natural to expect that the covariance of the continuous time limit, if it exists, satisfies (\ref{Ricc}) of the Kalman-Bucy Filter. Using (\ref{ESRFupdate}) and (\ref{StochEnKFF}) with deterministic model perturbations of the form $hQ^{\frac{1}{2}}\hat{W}_k^{(i),h}$ the evolution equations of the forecast and update covariance matrix read as follows:
\begin{align}\label{recurP}
P_k^{f} &= \left({\rm Id} +hA\right)P_{k-1}^{a}\left({\rm Id} +hA\right)^T\notag\\
&\hspace{0.5cm} + \frac{h}{M-1}\sum_{i=1}^M \left({\rm Id} + hA\right)^T\left(X_{t_{k-1}}^{(i),a} - \bar{x}_{k-1}^{a}\right)\left(\hat{W}_k^{(i),h} - \hat{w}_k^{h}\right)^TQ^{\frac{1}{2}}\notag\\
&\hspace{3cm} + Q^{\frac{1}{2}}\left(\hat{W}_k^{(i),h}-\hat{w}_k^{h}\right)\left(X_{t_{k-1}}^{(i),a} - \bar{x}_{k-1}^{a}\right)^T\left({\rm Id} + hA\right)^T\notag\\
&\hspace{0.5cm} + \frac{h^2}{M-1}\sum_{i=1}^M Q^{\frac{1}{2}}\left(\hat{W}_k^{(i),h} - \hat{w}_k^{h}\right)\left(\hat{W}_k^{(i),h} - \hat{w}_k^{h}\right)^TQ^{\frac{1}{2}},\\
P_k^{a} &= \left({\rm Id} - hK_kG\right)P_k^{f}.
\end{align}
Therefore finding a choice of $\hat{W}_k^{(i),h}, i=1,...,M,$ such that
\begin{equation}\label{Pkfrecur}
P_k^{f} = P_{k-1}^{f} + h\left(AP_{k-1}^{f} + P_{k-1}^{f}A^T + Q - K_{k-1}GP_{k-1}^{f}\right) + O(h^2)
\end{equation}
formally yields (\ref{Ricc}). Throughout the paper we assume the following properties of the model perturbations $\hat{W}_k^{(i),h}$:\\
{\bf Assumption 1.}\label{AssumDiscreteW}
{\em It holds uniformly in} $k = 1,..., L=\frac{T}{h}$
\begin{itemize}
\item $\frac{1}{M-1}\sum_{i=1}^M \left(X_{t_{k-1}}^{(i),a} - \bar{x}_{k-1}^{a}\right)\left(\hat{W}_k^{(i),h} - \hat{w}_k^{h}\right)^TQ^{\frac{1}{2}} = \frac{1}{2} Q$
\item $\left\|\frac{1}{M-1}\sum_{i=1}^M Q^{\frac{1}{2}}\left(\hat{W}_k^{(i),h} - \hat{w}_k^{h}\right)\left(\hat{W}_k^{(i),h}-\hat{w}_k^{h}\right)^TQ^{\frac{1}{2}}\right\|_F \leq \kappa$
\item {\em w.l.o.g. }$\hat{W}_k^{(i),h}$ {\em are centered, i.e. }$\hat{w}_k^{h} = 0$.
\end{itemize}
\noindent
One can easily check that the resulting $P_k^{f}$ satisfies the recursion (\ref{Pkfrecur}) and as will be shown in the next section, converges to a continuous-time matrix-valued process satisfying (\ref{Ricc}).
\begin{example}\label{exampleDiscrW}
In \cite{langeStannat2019}, we discussed a fully deterministic EnKF using perturbations of the form
\begin{equation}\label{ReichW}
\hat{W}_k^{(i),h} := \frac{1}{2}Q^{\frac{1}{2}}\left(P_{k-1}^{a}\right)^{-1}\left(X_{t_{k-1}}^{(i),a} - \bar{x}_{k-1}^{a}\right).
\end{equation}
These satisfy Assumption 1. Indeed: it holds
\begin{align}
&\frac{1}{M-1}\sum_{i=1}^M \left(X_{t_{k-1}}^{(i),a} - \bar{x}_{k-1}^{a}\right)\left(\hat{W}_k^{(i),h} - \hat{w}_k^{h}\right)^TQ^{\frac{1}{2}}\notag\\
&= \frac{1}{2}P_{k-1}^{a}\left(P_{k-1}^{a}\right)^{-1}Q^{\frac{1}{2}}Q^{\frac{1}{2}} = \frac{1}{2}Q.
\end{align}
Further it holds $\hat{w}_k^{h} = 0$ and
\begin{equation}
\frac{1}{M-1}\sum_{i=1}^M Q^{\frac{1}{2}}\left(\hat{W}_k^{(i),h} - \hat{w}_k^{h}\right)\left(\hat{W}_k^{(i),h}-\hat{w}_k^{h}\right)^TQ^{\frac{1}{2}} = \frac{1}{4}Q\left(P_{k-1}^{a}\right)^{-1}Q.
\end{equation}
Furthermore, $\left\|\left(P_k^{a}\right)^{-1}\right\|_F$ is bounded uniformly in $k$ (see Appendix \ref{appProofReichCov}).
\end{example}
\begin{remark}
One can replace Assumption 1 on the model perturbations $\hat{W}^{(i),h}$ by the following quadratic matrix equation
\begin{align}\label{alternativeW}
&\frac{1}{M-1} \sum_{i=1}^M \left({\rm Id} + hA\right)\left(X_{t_{k-1}}^{(i),a} - \bar{x}_{k-1}^{a}\right)\left(\hat{W}_k^{(i),h} - \hat{w}_k^{h}\right)^TQ^{\frac{1}{2}} \notag\\
&\hspace{2cm}+ Q^{\frac{1}{2}}\left(\hat{W}_k^{(i),h} - \hat{w}_k^{h}\right)\left(X_{t_{k-1}}^{(i),a} - \bar{x}_{k-1}^{a}\right)^T\left({\rm Id} + hA\right)^T \notag\\
&+ \frac{h}{M-1}\sum_{i=1}^{M} Q^{\frac{1}{2}}\left(\hat{W}_k^{(i),h} - \hat{w}_k^{h}\right)\left(\hat{W}_k^{(i),h}-\hat{w}_k^{h}\right)^TQ^{\frac{1}{2}}\notag\\
&= Q + h\tilde{R}_k
\end{align}
with rest term $\tilde{R}_k$ uniformly bounded in $k$. With the notation
\begin{align}
\tilde{E}_{k-1}^{a} &:= \frac{1}{\sqrt{M-1}}\left({\rm Id} + hA\right)\left[X_{t_{k-1}}^{(i),a} - \bar{x}_{k-1}^{a}\right]_{i=1,...,M},\\
\mathcal{W}_k &:= \frac{1}{\sqrt{M-1}}Q^{\frac{1}{2}}\left[\hat{W}_k^{(i),h}-\bar{w}_k^{h}\right]_{i=1,...,M} \label{WMatrix}
\end{align}
this yields the problem of solving
\begin{equation}\label{quad1}
\tilde{E}_{k-1}^{a}\mathcal{W}_k^T + \mathcal{W}_k\left(\tilde{E}_{k-1}^{a}\right)^T + h\mathcal{W}_k\mathcal{W}_k^T = Q + h\tilde{R}_k.
\end{equation}
In the particular case of $\tilde{R}_k \equiv 0$, if
\begin{equation}\label{AlternativeW}
\mathcal{W}_k = -\frac{1}{h}\tilde{E}_{k-1}^{a} \pm J_{k-1}
\end{equation}
where $J_{k-1}$ solves
\begin{equation}\label{quad2}
J_{k-1}J_{k-1}^T = \frac{1}{h^2}\left({\rm Id} + hA\right)P_{k-1}^{a}\left({\rm Id} + hA\right)^T + \frac{1}{h}Q,
\end{equation}
then solving (\ref{quad1}) reduces to solving (\ref{quad2}).\\
\noindent
Looking back on Example \ref{exampleDiscrW}, observe that (\ref{ReichW}) assumes $M \geq d+1$ necessary for $P^{a}$ to be invertible. In (\ref{AlternativeW}), $P^{a}$ need not have full rank since $Q$ is already assumed to do so. However, for $h$ small, the first term in (\ref{AlternativeW}) dominates and in case $P^{a}$ is rank-deficient may cause numerical instabilities. Thus the case of $M \leq d$ needs to be treated with great care. Assuming $M \leq d$ in Example \ref{exampleDiscrW}, the inverse  in (\ref{ReichW}) should be replaced by the generalized inverse $\left(P_{k-1}^{a}\right)^{\dagger}$ yielding
\begin{equation}
\hat{W}_k^{(i),h} := \frac{1}{2}Q^{\frac{1}{2}}\left(P_{k-1}^{a}\right)^{\dagger}\left(X_{t_{k-1}}^{(i),a} - \bar{x}_{k-1}^{a}\right)
\end{equation}
which gives in (\ref{recurP})
\begin{equation}
P_k^{f} = P_{k-1}^{a} + h\left( AP_{k-1}^{a} + P_{k-1}^{a}A^T + P_{k-1}^{a}\left(P_{k-1}^{a}\right)^{\dagger}Q + Q\left(P_{k-1}^{a}\right)^{\dagger}P_{k-1}^{a}\right)+ O(h^2).
\end{equation}
Motivated by this example, we presume that the quadratic matrix equation at the beginning of this remark should in general be replaced by imposing
\begin{equation}
\text{(\ref{alternativeW})} = \Pi_{k-1}Q +h\tilde{R}_k
\end{equation}
where $\Pi_{k-1}$ is the projection onto the span of the ensemble deviations such that
\begin{equation}
\begin{aligned}
&\Pi_{k-1} \left(X_{t_{k-1}}^{(i),a}-\bar{x}_{k-1}^{a}\right) = X_{t_{k-1}}^{(i),a} - \bar{x}_{k-1}^{a},\\
&\Pi_{k-1}v = 0 \hspace{0,5cm}\forall v \in \text{span}\left\{X_{t_{k-1}}^{(1),a} - \bar{x}_{k-1}^{a}, ..., X_{t_{k-1}}^{(M),a} - \bar{x}_{k-1}^{a}\right\}^{\perp}.
\end{aligned}
\end{equation}
Using stochastic perturbations instead of deterministic is another alternative in the case $M \leq d$ due to the regularizing effect of the noise. This shall, however, be discussed in a separate paper.
\end{remark}

\section{Continuous time limit I: mean and covariance matrix.}\label{SectionConvCovMean}\\
\noindent
Imposing Assumption 1 on the deterministic model perturbations $\hat{W}^{(i),h}$, we are now able to rigorously show that the resulting covariance process converges to a process $P$ satisfying the Riccati equation (\ref{Ricc}):
\begin{theorem}\label{covConvergenceESRF}
Let $P$ denote the continuous-time matrix-valued process satisfying (\ref{Ricc}) and let Assumption 1 hold. If
\begin{equation}
\left\|P_0 - P_0^{a}\right\| \in O(h)
\end{equation}
then
\begin{equation}\label{covConvergenceF}
\sup_{t\in [0,T]} \left\|P_t - P_{\nu(t)}^{f}\right\| \in O(h) \hspace{0.5cm}\text{ and }\hspace{0.5cm} \sup_{t\in [0,T]} \left\|P_t - P_{\nu(t)}^{a}\right\| \in O(h) .
\end{equation}
\end{theorem}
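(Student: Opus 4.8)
The plan is to treat the discrete recursion for $P_k^f$ as a one-step Euler approximation of the Riccati equation \eqref{Ricc} and close a Gr\"onwall-type estimate on the error $e_k := P_{\nu(t_k)} - P_k^f$ (and likewise for $P_k^a$). First I would combine Assumption~1 with the expansion \eqref{recurP}: the centering condition kills the $\hat w_k^h$ terms, the first bullet point turns the two cross terms into $\tfrac h2 Q + \tfrac h2 Q = hQ$ after symmetrization (together with the $O(h^2)$ contributions coming from the extra factors of $({\rm Id}+hA)$), and the second bullet point bounds the pure-$\mathcal W_k$ term by $\kappa h^2$. This gives exactly the recursion \eqref{Pkfrecur}, i.e. $P_k^f = P_{k-1}^f + h\,\mathcal R(P_{k-1}^f) + h^2 \rho_k$, where $\mathcal R(P) = AP + PA^T + Q - PG^TC^{-1}GP$ is the Riccati vector field (using $K_{k-1}G = P_{k-1}^f G^T C^{-1}G + O(h)$, which I would need to fold into $\rho_k$) and $\|\rho_k\|$ is bounded uniformly in $k$ and $h$.

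Next I would control the local truncation error of the exact flow: since $t\mapsto P_t$ solves \eqref{Ricc} with a $C^1$ (indeed smooth) right-hand side on the compact interval $[0,T]$, $P$ is bounded and Lipschitz, so $P_{t_k} = P_{t_{k-1}} + h\,\mathcal R(P_{t_{k-1}}) + O(h^2)$ with a uniform constant. Subtracting the two recursions yields
\[
\|e_k\| \le \|e_{k-1}\| + h\,\bigl\|\mathcal R(P_{t_{k-1}}) - \mathcal R(P_{k-1}^f)\bigr\| + C h^2 .
\]
The key point is that $\mathcal R$ is \emph{locally} Lipschitz because of the quadratic term $PG^TC^{-1}GP$; to get a global Lipschitz constant along the iteration I need an a priori bound $\sup_k \|P_k^f\| \le R$ independent of $h$. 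I would obtain this by a separate induction: $P_k^f$ is positive semidefinite (it is a covariance matrix by construction, \eqref{ESRFupdate} plus the perturbation structure), and from $\mathrm{tr}(P_k^f) \le \mathrm{tr}(P_{k-1}^f) + h(2\|A\|\,\mathrm{tr}(P_{k-1}^f) + \mathrm{tr}\,Q) + Ch^2$ — the negative Riccati term only helps — Gr\"onwall gives $\mathrm{tr}(P_k^f) \le e^{2\|A\|T}(\mathrm{tr}\,P_0^a + T\,\mathrm{tr}\,Q) + o(1) =: R$ uniformly. On the ball $\{\|P\|\le R\}\cup\{\text{values of }P_t\}$ the field $\mathcal R$ has a Lipschitz constant $L$, so $\|e_k\| \le (1+Lh)\|e_{k-1}\| + Ch^2$, and the discrete Gr\"onwall inequality gives $\|e_k\| \le e^{LT}\bigl(\|e_0\| + (C/L)h\bigr)$, which is $O(h)$ by the hypothesis $\|P_0 - P_0^a\| \in O(h)$. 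Since $\|P_k^a - P_k^f\| = \|hK_kGP_k^f\| \le Ch$, the second assertion in \eqref{covConvergenceF} follows from the first; interpolating over $t\in[t_k,t_{k+1})$ using the Lipschitz continuity of $t\mapsto P_t$ and $\|P_{\nu_+(t)}^f - P_{\nu(t)}^f\| = O(h)$ upgrades the pointwise-in-$k$ bound to the claimed $\sup_{t\in[0,T]}$ bound.

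The main obstacle is the uniform a priori bound on $\|P_k^f\|$: without it the quadratic Riccati nonlinearity could in principle make the Euler iterates blow up faster than the error estimate can absorb, and the Gr\"onwall argument collapses. I expect to handle this via the trace/positivity argument sketched above, which crucially uses that the dangerous term $-P_k^f G^T C^{-1} G P_k^f$ enters with the favorable sign; the only care needed is that the $O(h^2)$ remainders $\rho_k$ are themselves bounded in terms of quantities already controlled (e.g. $\|K_k\| \le \|G^T C^{-1}\| \cdot \|P_k^f\|$ once $C^{-1}$ is fixed), so that the induction on $\mathrm{tr}(P_k^f)$ genuinely closes. A minor secondary point is making the ``$= hQ$ after symmetrization'' bookkeeping precise: the two cross terms in \eqref{recurP} carry asymmetric factors of $({\rm Id}+hA)$ versus $({\rm Id}+hA)^T$, so one transpose-plus-its-own contributes $\tfrac12 Q^{1/2}$-type pieces that only sum to $Q$ modulo an $O(h^2)$ term, which is harmless but should be stated.
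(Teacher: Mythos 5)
Your proposal is correct and follows essentially the same route as the paper's proof: a uniform a priori bound on $\|P_k^f\|$ obtained from the positive semidefinite ordering $P_k^a \leq P_k^f$ plus a Gronwall argument (the paper's Lemma~\ref{boundPf}), a local Lipschitz estimate for the quadratic Riccati term, absorption of the discrepancy between $\left(C+hGP_k^fG^T\right)^{-1}$ and $C^{-1}$ into an $O(h)$ remainder (the paper uses the Woodbury identity for this), Lipschitz-in-time interpolation of $P_t$, and a final Gronwall step, with the bound for $P^a_{\nu(t)}$ deduced from $P_k^a=\left({\rm Id}-hK_kG\right)P_k^f$ and the uniform bound on $\|K_k\|$. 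The only cosmetic differences are that you phrase the error estimate as a discrete Euler/Gronwall recursion and control the a priori bound via the trace, whereas the paper works with an integral formulation and the operator norm; the substance is identical.
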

\noindent
First of all observe that $\|P_t\|$ is uniformly bounded in time on $[0,T]$. For the proof of Theorem \ref{covConvergenceESRF}, we further need the following lemma which is an immediate consequence of our above assumptions:
\begin{lemma}\label{boundPf}
Given Assumption 1, there exists a constant $0 < p_T^{*,f} < \infty$ depending on $\|P_0^{a}\|$ and $\kappa$ such that for all $k = 1,..., L$ it holds
\begin{equation}
\left\|P_k^{f}\right\| \leq p_T^{*,f}.
\end{equation}
This implies that also $\|K_k\|$ is uniformly bounded in $k$.
\end{lemma}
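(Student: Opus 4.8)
The plan is to convert the recursion (\ref{recurP}) together with $P_k^{a}=({\rm Id}-hK_kG)P_k^{f}$ into a perturbed linear discrete Lyapunov recursion and then close it with a discrete Gr\"onwall estimate. \textbf{Step 1 (simplifying the recursion).} Inserting Assumption 1 into (\ref{recurP}), the first bullet makes the first-order cross term equal to $hQ$, the only price being an extra contribution of order $h^2$ coming from the factors $({\rm Id}+hA)$, whose norm is controlled in terms of $\|A\|$ and $\|Q\|$; the last sum in (\ref{recurP}) has Frobenius norm at most $\kappa h^2$ by the second bullet, hence operator norm at most $\kappa h^2$. Thus there is a constant $c=c(\|A\|,\|Q\|,\kappa)$, independent of $k$ and of $h\le 1$, with
\[
P_k^{f}=({\rm Id}+hA)P_{k-1}^{a}({\rm Id}+hA)^T+hQ+h^2R_k,\qquad \|R_k\|\le c .
\]

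\textbf{Step 2 (monotonicity of the update --- the crux).} Writing the Kalman update as $P_k^{a}=P_k^{f}-hP_k^{f}G^T(C+hGP_k^{f}G^T)^{-1}GP_k^{f}$ and using that $C+hGP_k^{f}G^T$ is positive definite whenever $P_k^{f}$ is positive semidefinite (as $C$ is), the Sherman--Morrison--Woodbury identity --- applied to $P_k^{f}+\varepsilon{\rm Id}$ and letting $\varepsilon\downarrow 0$ --- gives $P_k^{a}=\big((P_k^{f})^{-1}+hG^TC^{-1}G\big)^{-1}\succeq 0$, while the subtracted term above is positive semidefinite; thus $0\preceq P_k^{a}\preceq P_k^{f}$, and since both matrices are symmetric and positive semidefinite, $\|P_k^{a}\|\le\|P_k^{f}\|$. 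This step is essential: the cruder estimate $\|P_k^{a}\|\le(1+h\|K_kG\|)\|P_k^{f}\|$ with $\|K_kG\|\lesssim\|P_k^{f}\|$ would turn the recursion into a discrete Riccati inequality quadratic in $\|P_k^{f}\|$ with no evident global bound, whereas the monotonicity keeps it linear.

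\textbf{Step 3 (Gr\"onwall and the bound on $K_k$).} Taking operator norms in Step 1, using $\|{\rm Id}+hA\|^2\le 1+ha$ with $a:=2\|A\|+\|A\|^2$ (valid for $h\le 1$) together with Step 2, one obtains $\|P_k^{f}\|\le(1+ha)\|P_{k-1}^{f}\|+hb$ for $k\ge 2$ and $\|P_1^{f}\|\le(1+ha)\|P_0^{a}\|+hb$, where $b:=\|Q\|+c$. Iterating this geometric recursion and using $(1+ha)^k\le e^{ahk}\le e^{aT}$ for $k\le L=T/h$ yields
\[
\|P_k^{f}\|\le e^{aT}\|P_0^{a}\|+\frac{b}{a}\big(e^{aT}-1\big)=:p_T^{*,f},
\]
which depends only on $\|P_0^{a}\|$, $\kappa$ and the fixed data $A,Q$. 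Finally, $C+hGP_k^{f}G^T\succeq C$ gives $\|(C+hGP_k^{f}G^T)^{-1}\|\le\|C^{-1}\|$, whence $\|K_k\|\le\|P_k^{f}\|\,\|G\|\,\|C^{-1}\|\le p_T^{*,f}\,\|G\|\,\|C^{-1}\|$, the claimed uniform bound. The only genuinely delicate point is the Loewner monotonicity $0\preceq P_k^{a}\preceq P_k^{f}$ in Step 2; the rest is a routine discrete Gr\"onwall argument, needing only that $h$ be small enough to absorb the $O(h^2)$ remainder into the $O(h)$ terms.
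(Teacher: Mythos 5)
Your proof is correct and follows essentially the same route as the paper: the key step in both is the Loewner-order monotonicity $0\preceq P_k^{a}\preceq P_k^{f}$ of the square-root update, which turns the recursion from Assumption 1 into a linear inequality $\|P_k^{f}\|\le(1+ha)\|P_{k-1}^{f}\|+hb$ closed by a discrete Gr\"onwall argument, with the bound on $\|K_k\|$ following from $(C+hGP_k^{f}G^T)^{-1}\preceq C^{-1}$. The only cosmetic difference is that the paper keeps the matrix inequality one step longer (conjugating $P_{k-1}^{a}\preceq P_{k-1}^{f}$ by ${\rm Id}+hA$ before passing to norms), whereas you pass to operator norms immediately after justifying $\|P_{k-1}^{a}\|\le\|P_{k-1}^{f}\|$ via positive semidefiniteness.
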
\\
For the proof see Appendix \ref{Pfbound}.\\
\begin{proof}{\bf(Proof of Theorem \ref{covConvergenceESRF}.)}
It holds
\begin{align}
P_{t_k}-P_k^{f}&= P_0 - P_0^{f}\notag\\
&\hspace{0.5cm} + \int_0^{t_k} A\left(P_s - P_{\nu(s)}^{f}\right) + \left(P_s - P_{\nu(s)}^{f}\right)A^T\notag \\
&\hspace{1.5cm}- \left(P_sG^TC^{-1}GP_s - P_{\nu(s)}^{f}G^T\left(hGP_{\nu(s)}^{f}G^T+C\right)^{-1}GP_{\nu(s)}^{f}\right){\rm d}s\notag\\
&\hspace{0.5cm}+h^2 \sum_{j=0}^{k-1} R_j
\end{align}
where
\begin{align}
R_j &:= hAK_jGP_j^{f}A^T\notag\\
&\hspace{0.5cm} - \frac{1}{M-1}\sum_{i=1}^M Q^{\frac{1}{2}}\left(\hat{W}_{j+1}^{(i),h}-\hat{w}^{h}_{j+1}\right)\left(\hat{W}_{j+1}^{(i),h}-\hat{w}^{h}_{j+1}\right)^TQ^{\frac{T}{2}}\notag\\
&\hspace{0.5cm} - \left(AP_j^{f}A^T - AK_jGP_j^{f} - K_jGP_j^{f}A^T + AQ + QA^T\right).
\end{align}
By Lemma \ref{boundPf} and Assumption 1 we obtain a constant $0 < r_T^{*} < \infty$ such that $\|R_j\| < r_T^{*}$ for all $j = 0,..., L-1$. Further it holds
\begin{align}
&\left\|P_sG^TC^{-1}GP_s - P_{\nu(s)}^{f}G^T\left(hGP_{\nu(s)}^{f}G^T + C\right)^{-1}GP_{\nu(s)}^{f}\right\|\notag\\
&\leq \left\|P_sG^TC^{-1}GP_s - P_{\nu(s)}^{f}G^TC^{-1}GP_{\nu(s)}^{f}\right\| \notag\\
&\hspace{0.5cm}+ \left\|P_{\nu(s)}^{f}G^T\left(C^{-1} - \left(hGP_{\nu(s)}^{f}G^T + C\right)^{-1}\right)GP_{\nu(s)}^{f}\right\|\notag\\
&\leq \left\|P_s-P_{\nu(s)}^{f}\right\|\|G\|^2\|C^{-1}\|\left(\|P_s\|+\left\|P_{\nu(s)}^{f}\right\|\right)\notag\\
&\hspace{0.5cm}+ \left\|P_{\nu(s)}^{f}\right\|^2\|G\|^2\left\|C^{-1} - \left(hGP_{\nu(s)}^{f}G^T + C\right)^{-1}\right\|.
\end{align}
Using the Woodbury matrix identity, we can estimate
\begin{equation}
\left\|C^{-1} - \left(hGP_{\nu(s)}^{f}G^T + C\right)^{-1}\right\| \leq h\left\|C^{-1}\right\|^2\|G\|^2\left\|P_{\nu(s)}^{f}\right\|. 
\end{equation}
Thus, again using Lemma \ref{boundPf}, we obtain for a constant $\tilde{C} = \tilde{C}(T)$
\begin{equation}
\left\|P_{t_k}-P_k^{f}\right\| \leq \left\|P_0 - P_0^{a}\right\| + \tilde{C}\int_0^{t_k} \left\|P_s - P_{\nu(s)}^{f}\right\| {\rm d}s+h T r_T^{*}.
\end{equation}
Since $P_t$ satisfies the Riccati equation (\ref{Ricc}), one can further show that 
\begin{equation}
\sup_{t\in [0,T]} \left\|P_t - P_{\eta(t)}\right\| \in O(h)
\end{equation}
which by a Gronwall argument yields
\begin{equation}
\sup_{t\in [0,T]}\left\|P_t-P_{\nu(t)}^f\right\| \in O(h).
\end{equation}
Due to
\begin{equation}
P_k^{a} = \left({\rm Id} - hK_kG\right)P_k^{f},
\end{equation}
this further yields by Lemma \ref{boundPf}
\begin{equation}
\sup_{t\in [0,T]}\left\|P_t-P_{\nu(t)}^{a}\right\| \in O(h).
\end{equation}
\end{proof}
\\
\noindent
This convergence result further implies convergence of the ensemble mean: recall that by Assumption 1 it holds $\hat{w}_k^{h}=0$ which gives the following recursion:
\begin{align}
\bar{x}_k^{f} &= \left({\rm Id} + hA\right)\bar{x}_{k-1}^{a},\\
\bar{x}_k^{a} &= \bar{x}_k^{f} + K_k\left(\Delta Y_k - hG\bar{x}_k^{f}\right).
\end{align}
For the proceeding analysis in this section and throughout the whole paper it is important to stress the following: the Euler-Maruyama time-discretization of the observation process $Y$
\begin{equation}
\Delta Y_k := Y_{t_k} - Y_{t_{k-1}} \approx hGX_{t_{k-1}} + C^{\frac{1}{2}}\left(V_{t_k}-V_{t_{k-1}}\right)
\end{equation}
yields a discrete-time observation process with observation operator $hG$. Thus $hG$ will be the modeling assumption on the observations. The actual observations used in the update, however, take the form
\begin{equation}\label{formY}
\Delta Y_k = Y_{t_k} - Y_{t_{k-1}} = \int_{t_{k-1}}^{t_k} GX_s^{\text{ref}} {\rm d}s + C^{\frac{1}{2}}\left(V_{t_k} - V_{t_{k-1}}\right)
\end{equation}
where $X^{\text{ref}}$ is a reference trajectory of the continuous-time process $X$. We will assume that
\begin{equation}\label{RefTraj}
\sup_{t \in [0,T]} \mathbb{E}\left[\left\|X_t^{\text{ref}}\right\|^2\right] < \infty.
\end{equation}
Therefore, we will use the above approximate model of the observations to set up the filter but use (\ref{formY}) for the actual observations in the following analysis.\\
\noindent
First of all note that it holds:
\begin{lemma}\label{BoundMean}
The ensemble mean satisfies
\begin{equation}
\sup_{t \in [0,T]} \mathbb{E}\left[\left\|\bar{x}_{\nu(t)}^{a}\right\|^2\right] < \infty.
\end{equation}
\end{lemma}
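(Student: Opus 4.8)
The plan is to turn the two-step mean recursion into a single closed linear recursion and then propagate an $L^2$-estimate through it, the delicate point being to keep the bound uniform as $h\to 0$. Substituting the forecast relation $\bar x_k^{f}=({\rm Id}+hA)\bar x_{k-1}^{a}$ into the update relation $\bar x_k^{a}=\bar x_k^{f}+K_k(\Delta Y_k-hG\bar x_k^{f})$ gives $\bar x_k^{a}=\Phi_k\bar x_{k-1}^{a}+K_k\Delta Y_k$ with $\Phi_k:=({\rm Id}-hK_kG)({\rm Id}+hA)$. By Lemma \ref{boundPf} the gains are bounded, $\|K_k\|\le\kappa_K$ uniformly in $k\le L=T/h$, so $\|\Phi_k\|\le(1+h\kappa_K\|G\|)(1+h\|A\|)\le 1+ch$ for $h\le1$, and iterating the recursion yields
\[
\bar x_k^{a}=\Psi_{k,0}\,\bar x_0^{a}+\sum_{j=1}^{k}\Psi_{k,j}K_j\Delta Y_j,\qquad \Psi_{k,j}:=\Phi_k\Phi_{k-1}\cdots\Phi_{j+1},\quad \|\Psi_{k,j}\|\le(1+ch)^{k-j}\le e^{cT}.
\]

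Next I would insert the genuine form \eqref{formY} of the increments, $\Delta Y_j=I_j+N_j$ with $I_j:=\int_{t_{j-1}}^{t_j}GX_s^{\rm ref}\,{\rm d}s$ and $N_j:=C^{\frac12}(V_{t_j}-V_{t_{j-1}})$, and estimate the two resulting sums separately. For the drift part, Cauchy--Schwarz together with the standing bound \eqref{RefTraj} gives $\mathbb{E}\|I_j\|^2\lesssim h^2$, and one further Cauchy--Schwarz to extract the sum from the norm turns $\mathbb{E}\|\sum_{j=1}^k\Psi_{k,j}K_jI_j\|^2$ into a quantity $\lesssim k\sum_{j\le k}\mathbb{E}\|I_j\|^2\lesssim k^2h^2\le T^2$, uniform in $h$. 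For the noise part I would exploit that $N_1,\dots,N_k$ are independent, centred, and independent of the matrices $\Psi_{k,j}K_j$; conditioning on the initial ensemble the cross terms vanish, leaving $\mathbb{E}\|\sum_{j=1}^k\Psi_{k,j}K_jN_j\|^2=\sum_{j=1}^k\mathbb{E}\|\Psi_{k,j}K_jN_j\|^2\lesssim\sum_{j\le k}h=kh\le T$. Together with $\mathbb{E}\|\Psi_{k,0}\bar x_0^{a}\|^2\le e^{2cT}\mathbb{E}\|\bar x_0^{a}\|^2$ and finiteness of the initial second moment, this bounds $\mathbb{E}\|\bar x_k^{a}\|^2$ by a constant independent of $k\le L$ and of $h$, which is precisely $\sup_{t\in[0,T]}\mathbb{E}[\|\bar x_{\nu(t)}^{a}\|^2]<\infty$.

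The step I expect to be the actual obstacle is keeping the estimate from degenerating as $h\to0$: a naive step-by-step $L^2$ bound would, via Young's inequality, peel off $\|K_k\Delta Y_k\|^2\lesssim h$ at the cost of a constant $1+h^{-1}$, and summing this over the $L=T/h$ steps makes the bound explode. The remedy is to solve the recursion explicitly and treat the observation increments $N_j$ through the orthogonality argument above rather than the triangle inequality. The one structural input this needs is that the covariances $P_k^{f},P_k^{a}$ --- hence the gains $K_k$ and the transition matrices $\Phi_k$ --- evolve autonomously, driven by the initial ensemble deviations only and decoupled from the observation noise $V$; this is visible from \eqref{recurP} and \eqref{ESRFupdate}, which involve the ensemble solely through its deviations, so that $V$ enters the mean recursion only via the explicit term $K_k\Delta Y_k$.
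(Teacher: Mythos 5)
Your proposal is correct: the closed recursion $\bar x_k^{a}=({\rm Id}-hK_kG)({\rm Id}+hA)\bar x_{k-1}^{a}+K_k\Delta Y_k$, the uniform gain bound from Lemma \ref{boundPf}, the splitting of $\Delta Y_j$ according to \eqref{formY} with \eqref{RefTraj}, and the orthogonality of the centred increments against the (initial-ensemble-measurable) matrices $\Psi_{k,j}K_j$ together give a bound uniform in $k\le T/h$; the decoupling you invoke at the end is exactly what legitimises dropping the cross terms, since for all three algorithms the deviations, hence $P_k^{f}$, $K_k$ and $\Phi_k$, never see $V$ or $X^{\rm ref}$. The paper reaches the same conclusion by a slightly different execution: it does \emph{not} unroll the recursion, but runs a one-step second-moment estimate in which the cross term $2\langle(\cdot)\bar x_k^{a},K_{k+1}\Delta Y_{k+1}\rangle$ is retained rather than absorbed by Young's inequality; taking expectations, the Brownian part of $\Delta Y_{k+1}$ drops out (equation \eqref{EmeanObs}, using the same independence you made explicit), the reference part contributes $h\,\mathbb{E}\|\bar x_k^{a}\|^2+O(h^2)$, and $\mathbb{E}\|K_{k+1}\Delta Y_{k+1}\|^2\lesssim h$, so that $\mathbb{E}\|\bar x_{k+1}^{a}\|^2\le(1+hC^{(1)})\mathbb{E}\|\bar x_k^{a}\|^2+hC^{(2)}$ and a discrete Gronwall argument closes the proof. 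So your diagnosis of the obstacle is right in that a naive Young-type step bound blows up, but the stepwise route is not doomed: keeping the cross term and killing its martingale part in expectation (the paper's choice) is the recursive counterpart of your global orthogonality argument, while your variation-of-constants formulation buys an explicit, Gronwall-free bound at the cost of carrying the product bound $\|\Psi_{k,j}\|\le e^{cT}$ and stating the decoupling of the gain sequence explicitly.
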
\\
For the proof see Appendix \ref{appProofBoundMean}.\\
\\
\noindent
This enables us to show:
\begin{theorem}\label{meanConvergenceESRF}
Let $\left(\bar{x}_t\right)_{t \in [0,T]}$ denote the continuous-time vector-valued process satisfying
\begin{equation}\label{KBmean}
{\rm d}\bar{x}_t = A\bar{x}_t{\rm d}t + P_tG^TC^{-1}\left({\rm d}Y_t - G\bar{x}_t{\rm d}t\right).
\end{equation}
Then
\begin{equation}
\mathbb{E}\left[\sup_{t \in [0,T]} \left\|\bar{x}_t - \bar{x}_{\nu(t)}^{a}\right\|^2\right] \in O(h).
\end{equation}
\end{theorem}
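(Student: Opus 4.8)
The plan is to telescope the one--step recursion for the ensemble mean into an integral identity on $[0,t_k]$, subtract the integral form of the Kalman--Bucy equation (\ref{KBmean}), and close the resulting estimate by a discrete Gr\"onwall argument for $\psi_k:=\mathbb{E}\big[\max_{j\le k}\|\bar x_{t_j}-\bar x_j^a\|^2\big]$, the covariance rate of Theorem~\ref{covConvergenceESRF} controlling all discretization errors. Substituting $\bar x_k^f=({\rm Id}+hA)\bar x_{k-1}^a$ into the update step and using the exact form (\ref{formY}) of $\Delta Y_k$ gives, for each $k$,
\begin{align*}
\bar x_k^a=\bar x_{k-1}^a+hA\bar x_{k-1}^a-hK_kG\bar x_{k-1}^a+K_k\!\int_{t_{k-1}}^{t_k}\!GX_s^{\text{ref}}\,{\rm d}s+K_kC^{\frac12}\big(V_{t_k}-V_{t_{k-1}}\big)-h^2K_kGA\bar x_{k-1}^a.
\end{align*}
Telescoping, writing each sum as the integral of a piecewise--constant integrand (the noise sum becomes the genuine It\^o integral $\int_0^{t_k}K_{\nu_+(s)}C^{\frac12}\,{\rm d}V_s$ since $K_{\nu_+(s)}$ is left--continuous and $\mathcal{F}_{\eta(s)}$--measurable) and subtracting (\ref{KBmean}) in integral form with ${\rm d}Y_s=GX_s^{\text{ref}}\,{\rm d}s+C^{\frac12}\,{\rm d}V_s$, one arrives after regrouping at
\begin{align*}
\bar x_{t_k}-\bar x_k^a&=\big(\bar x_0-\bar x_0^a\big)+\int_0^{t_k}\!\big(A-P_sG^TC^{-1}G\big)\big(\bar x_s-\bar x_{\nu(s)}^a\big)\,{\rm d}s\\
&\quad+\int_0^{t_k}\!\big(P_sG^TC^{-1}-K_{\nu_+(s)}\big)G\big(X_s^{\text{ref}}-\bar x_{\nu(s)}^a\big)\,{\rm d}s\\
&\quad+\int_0^{t_k}\!\big(P_sG^TC^{-1}-K_{\nu_+(s)}\big)C^{\frac12}\,{\rm d}V_s+h^2\!\!\sum_{j\le k}\!K_jGA\bar x_{j-1}^a.
\end{align*}

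\textbf{Key estimate.} The decisive input is $\sup_{s\in[0,T]}\|K_{\nu_+(s)}-P_sG^TC^{-1}\|\in O(h)$: writing $K_j=P_j^fG^T(C+hGP_j^fG^T)^{-1}$, the Woodbury identity and Lemma~\ref{boundPf} bound $\|(C+hGP_j^fG^T)^{-1}-C^{-1}\|$ by $O(h)$ (exactly as in the proof of Theorem~\ref{covConvergenceESRF}), while Theorem~\ref{covConvergenceESRF} together with the time--Lipschitz continuity of the Riccati solution $P$ bounds $\|P_{\nu_+(s)}^f-P_s\|$ by $O(h)$; with the uniform bound of Lemma~\ref{boundPf} this gives the claim.

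\textbf{Term bounds and Gr\"onwall.} Assuming, in line with the initialization in Theorem~\ref{covConvergenceESRF}, that $\mathbb{E}\|\bar x_0-\bar x_0^a\|^2\in O(h)$ (e.g.\ $\bar x_0^a=\bar x_0$), I estimate the terms of the difference identity in $\mathbb{E}\big[\max_{j\le k}\|\cdot\|^2\big]$. By the Key estimate and Cauchy--Schwarz, the second integral is $O(h^2)$ using the reference--trajectory bound (\ref{RefTraj}) and the mean bound of Lemma~\ref{BoundMean}; the stochastic integral is, by Doob's inequality and the It\^o isometry, bounded by $\lesssim\int_0^T\|P_sG^TC^{-1}-K_{\nu_+(s)}\|^2\,{\rm d}s\in O(h^2)$; and the last sum is $O(h^2)$ again by Lemma~\ref{BoundMean}. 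The first integral has integrand of norm $\lesssim\|\bar x_s-\bar x_{\nu(s)}^a\|$ (boundedness of $\|P_s\|$), so by Cauchy--Schwarz it is $\lesssim\int_0^{t_k}\mathbb{E}\|\bar x_s-\bar x_{\nu(s)}^a\|^2\,{\rm d}s$; splitting $\|\bar x_s-\bar x_{\nu(s)}^a\|^2\lesssim\|\bar x_s-\bar x_{\eta(s)}\|^2+\|\bar x_{\eta(s)}-\bar x_{\nu(s)}^a\|^2$, the in--cell oscillation $\|\bar x_s-\bar x_{\eta(s)}\|$ is $O(\sqrt h)$ in $L^2$ (from (\ref{KBmean}), (\ref{RefTraj}) and boundedness of $P$), and the second piece is $\le\psi_{\nu(s)}$ in expectation, so the first integral contributes $\lesssim h+h\sum_{j<k}\psi_j$. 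Collecting everything gives $\psi_k\lesssim h+h\sum_{j<k}\psi_j$, and the discrete Gr\"onwall lemma yields $\max_{0\le k\le L}\psi_k\in O(h)$. Finally, for $t\in[t_k,t_{k+1})$ one uses $\|\bar x_t-\bar x_k^a\|^2\lesssim\|\bar x_{t_k}-\bar x_k^a\|^2+\sup_{r\in[t_k,t_{k+1}]}\|\bar x_r-\bar x_{t_k}\|^2$, the latter being $O(h)$ in $L^2$ (drift of (\ref{KBmean}) contributes $O(h)$, the martingale part $O(\sqrt h)$ in $L^2$ by Doob and the It\^o isometry); taking $\sup_{t\in[0,T]}$ and then $\mathbb{E}$ gives the assertion.

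\textbf{Main obstacle.} The delicate point is the Key estimate, combined with keeping the maximum inside the expectation throughout: one must verify that each replacement --- the discrete Kalman gain $K_{\nu_+(s)}$ for the continuous gain $P_sG^TC^{-1}$, the piecewise--constant ensemble mean $\bar x_{\nu(s)}^a$ for $\bar x_s$, and the modelled observation increments for the true ones (\ref{formY}) --- produces an error genuinely of order $h$ in the appropriate $L^2$--sense, \emph{uniformly} in $s\in[0,T]$, so that the martingale term can be disposed of by Doob/It\^o while the remaining drift term still closes under Gr\"onwall. Precisely here the uniform covariance rate of Theorem~\ref{covConvergenceESRF}, the operator bound of Lemma~\ref{boundPf}, the mean moment bound of Lemma~\ref{BoundMean} and the reference--trajectory assumption (\ref{RefTraj}) enter.
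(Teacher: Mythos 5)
Your proposal is correct and follows essentially the same route as the paper: the same integral identity for the grid error obtained from the recursion $\bar{x}_k^{a} = \bar{x}_{k-1}^{a} + hA\bar{x}_{k-1}^{a} + K_k\left(\Delta Y_k - hG\bar{x}_{k-1}^{a} - h^2GA\bar{x}_{k-1}^{a}\right)$ and \eqref{formY}, the same key gain estimate \eqref{GainEstimate} via the Woodbury-type bound, Lemma \ref{boundPf} and Theorem \ref{covConvergenceESRF}, the same treatment of the martingale term by Doob/It\^o and of the remaining terms via \eqref{RefTraj} and Lemma \ref{BoundMean}, and a Gr\"onwall argument combined with the in-cell oscillation bound $\mathbb{E}\left[\sup_{t}\left\|\bar{x}_t - \bar{x}_{\eta(t)}\right\|^2\right] \in O(h)$. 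The only cosmetic differences are that you state the initial-condition assumption $\mathbb{E}\left[\left\|\bar{x}_0 - \bar{x}_0^{a}\right\|^2\right] \in O(h)$ explicitly (the paper uses it implicitly) and run a discrete rather than continuous Gr\"onwall lemma.
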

\begin{proof}
First of all, it holds
\begin{equation}
\mathbb{E}\left[\sup_{t \in [0,T]}\left\|\bar{x}_t - \bar{x}_{\nu(t)}^{a}\right\|^2\right] \lesssim \mathbb{E}\left[\sup_{t \in [0,T]}\left\|\bar{x}_t - \bar{x}_{\eta(t)}\right\|^2\right] + \mathbb{E}\left[\sup_{t \in [0,T]}\left\|\bar{x}_{\eta(t)} - \bar{x}_{\nu(t)}^{a}\right\|^2\right].
\end{equation}
The updated ensemble mean satisfies the following recursion
\begin{equation}
\bar{x}_k^{a} = \bar{x}_{k-1}^{a} + hA\bar{x}_{k-1}^{a} + K_k\left(\Delta Y_k - hG\bar{x}_{k-1}^{a} - h^2GA\bar{x}_{k-1}^{a}\right).
\end{equation}
Thus we obtain
\begin{align}
\bar{x}_{\eta(t)} - \bar{x}_{\nu(t)}^{a} &= \bar{x}_0 - \bar{x}_0^{a}\notag\\
&\hspace{0.5cm} + \int_0^{\eta(t)} A\left(\bar{x}_s - \bar{x}_{\nu(s)}^{a}\right) - \left(P_sG^TC^{-1}\bar{x}_s - K_{\nu_{+}(s)}G\bar{x}_{\nu(s)}^{a}\right) {\rm d}s\notag\\
&\hspace{0.5cm} + \int_0^{\eta(t)}\left(P_sG^TC^{-1} - K_{\nu_{+}(s)}\right){\rm d}Y_s\notag\\
&\hspace{0.5cm} - h\int_0^{\eta(t)} GA\bar{x}_{\nu(s)}^{a}{\rm d}s.
\end{align}
Using (\ref{formY}), we can estimate via the Cauchy-Schwarz inequality
\begin{align}
&\left\|\bar{x}_{\eta(t)} - \bar{x}_{\nu(t)}^{a}\right\|^2\notag\\
&\lesssim \left\|\bar{x}_0 - \Bar{x}_0^{a}\right\|^2\notag\\
&\hspace{0.5cm} + \eta(t) \int_0^{\eta(t)} \left(\|A\|^2+ \|P_s\|^2\|G\|^4\left\|C^{-1}\right\|^2\right)\left\|\bar{x}_s - \bar{x}_{\nu(s)}^{a}\right\|^2 \notag\\
&\hspace{3cm} + \left\|K_{\nu_{+}(s)} - P_sG^TC^{-1}\right\|^2\|G\|^2\left(\left\|X_s^{\text{ref}}\right\|^2 + \left\|\bar{x}_{\nu(s)}^{a}\right\|^2\right){\rm d}s\notag\\
&\hspace{0.5cm} + \left\|\int_0^{\eta(t)} \left(K_{\nu_{+}(s)} - P_sG^TC^{-1}\right)C^{\frac{1}{2}}{\rm d} V_s\right\|^2\notag\\
&\hspace{0.5cm} + h^2\eta(t)\int_0^{\eta(t)} \|G\|^2\|A\|^2\left\|\bar{x}_{\nu(s)}^{a}\right\|^2{\rm d}s
\end{align}
thus it holds
\newpage
\begin{align}
&\mathbb{E}\left[\sup_{t \in [0,T]} \left\|\bar{x}_{\eta(t)} - \bar{x}_{\nu(t)}^{a}\right\|^2\right]\notag\\
&\lesssim \mathbb{E}\left[\left\|\bar{x}_0 - \bar{x}_0^{a}\right\|^2\right]\notag\\
&\hspace{0.5cm} + T \int_0^T \mathbb{E}\left[\left(\|A\|^2+ \|P_s\|^2\|G\|^4\left\|C^{-1}\right\|^2\right)\sup_{r \in [0,s]} \left\|\bar{x}_r - \bar{x}_{\nu(r)}^{a}\right\|^2\right]\notag\\
&\hspace{3cm} + \mathbb{E}\left[\left\|K_{\nu_{+}(s)} - P_sG^TC^{-1}\right\|^2\|G\|^2\left(\left\|X_s^{\text{ref}}\right\|^2 + \left\|\bar{x}_{\nu(s)}^{a}\right\|^2\right)\right]{\rm d}s\notag\\
&\hspace{0.5cm} + \mathbb{E}\left[\sup_{t \in [0,T]} \left\|\int_0^{\eta(t)} \left(K_{\nu_{+}(s)} - P_sG^TC^{-1}\right)C^{\frac{1}{2}}{\rm d} V_s\right\|^2\right]\notag\\
&\hspace{0.5cm} + h^2T\int_0^{\eta(t)} \|G\|^2\|A\|^2\mathbb{E}\left[\left\|\bar{x}_{\nu(s)}^{a}\right\|^2\right]{\rm d}s.
\end{align}
\noindent
Observe now that we can estimate
\begin{align}
&\left\|K_{\nu_+(t)} - P_tG^TC^{-1}\right\|\notag\\
&\leq \left\|P_{\nu_+(t)}^{f}G^T\right\|\left\|\left(C+hGP_{\nu_+(t)}^{f}G^T\right)^{-1} - C^{-1}\right\| + \left\|P_{\nu_+(t)}^{f} - P_t\right\|\left\|G^TC^{-1}\right\|\notag\\
&\leq h \left\|P_{\nu_+(t)}^{f}\right\|^2\|G\|^3\|C^{-1}\|^2 + \left\|P_{\nu_+(t)}^{f} - P_t\right\|\|G\|\|C^{-1}\|.
\end{align}
Thus by Lemma \ref{boundPf} we obtain
\begin{equation}\label{GainEstimate}
\sup_{t\in [0,T]} \left\|K_{\nu_+(t)} - P_tG^TC^{-1}\right\| \in O(h).
\end{equation}
By assumption (\ref{RefTraj}), this yields
\begin{equation}
\mathbb{E}\left[\left\|K_{\nu_+(s)} - P_sG^TC^{-1}\right\|^2\|G\|^2\left\|X_s^{\text{ref}}\right\|^2\right] \in O(h^2)
\end{equation}
and we further deduce by the $L^p$-maximal-inequality
\begin{align}
&\mathbb{E}\left[\sup_{t \in [0,T]} \left\|\int_0^{\eta(t)} \left(K_{\nu_+(s)} - P_sG^TC^{-1}\right)C^{\frac{1}{2}}{\rm d}V_s\right\|^2\right] \notag\\
&\hspace{3cm}= \int_0^T \mathbb{E}\left[\left\|\left(K_{\nu_+(s)} - P_sG^TC^{-1}\right)C^{\frac{1}{2}}\right\|^2\right] {\rm d} s \in O(h^2).
\end{align}
Finally using (\ref{RefTraj}) and boundedness of $\|P_t\|$ on $[0,T]$, one can similarly show that
\begin{equation}
\mathbb{E}\left[\sup_{t \in [0,T]} \left\|\bar{x}_t - \bar{x}_{\eta(t)}\right\|^2\right] \in O(h).
\end{equation}
Thus Lemma \ref{BoundMean} and a Gronwall argument conclude the proof.
\end{proof}

\section{Algorithms.}\label{SectionAlgos}\\
\noindent
In this section, we introduce the three ESRF algorithms EAKF, ETKF, and the unperturbed filter from \cite{whitaker2002} which we will focus on in this paper.

\subsection{Ensemble Adjustment/Transform Kalman Filter.}
The transformations of $E_k^{f}$ in case of EAKF and ETKF are given by the following:
\begin{itemize}
\item EAKF: $E_k^{a} = A_k E_k^{f} $
\item ETKF: $E_k^{a} = E_k^{f} T_k$
\end{itemize}
for matrices $A_k$ and $T_k$ specified below. Then using the update step (\ref{KFmeanA}) of the ensemble mean, one computes the updated ensemble members via
\begin{equation}\label{ESRFmember}
X_{t_k}^{(i),a} = X_{t_k}^{(i),a} - \bar{x}_k^{a} + \bar{x}_k^{a} = E_k^{a} e_i + \bar{x}_k^{a}
\end{equation}
where $e_i \in \mathbb{R}^M$ with $(e_i)_j = \delta_{ij}, j= 1,..., M$. The structure of the transformation matrices and equivalence of both EAKF and ETKF has been summarized in \cite{tippett2003} in terms of the singular value decomposition factors of the forecast covariance matrix, as well as in the appendix of \cite{ott2004} using basic linear algebra.\\
In our recent paper \cite{langeStannat2020} using the following integral representation
\begin{equation}
\sqrt{P^{-1}} = \frac{1}{\sqrt{\pi}}\int_0^{\infty}\frac{1}{\sqrt{t}}e^{-tP}{\rm d}t
\end{equation}
for any symmetric positive semi-definite matrix $P$, we were able to identify an equivalent analytic representation of these transformations of the following form: as derived in \cite{langeStannat2020}, $A_k$ and $T_k$ are given by
\begin{align}
A_k &= \sqrt{P_k^{f}}\left({\rm Id} +h\sqrt{P_k^{f}}G^TC^{-1}G\sqrt{P_k^{f}}\right)^{-\frac{1}{2}}\sqrt{P_k^{f}}^{-1},\\
T_k &= \left({\rm Id} + \frac{h}{M-1}\left(E_k^{f}\right)^TG^TC^{-1}GE_k^{f}\right)^{-\frac{1}{2}}
\end{align}
where $\sqrt{P_k^{f}}$ denotes the symmetric positive semidefinite square root of $P_k^{f}$ and $\sqrt{P_k^{f}}^{-1}$ its pseudo inverse. These transformations are adjoint in the sense that it holds
\begin{equation}\label{equiAT}
A_k E_k^{f} = E_k^{f} T_k
\end{equation}
which is the consequence of the following important integral representation
\begin{equation}\label{IntegralRep}
A_kE_k^{f} = \frac{1}{\sqrt{\pi}}\int_0^{\infty}\frac{e^{-t}}{\sqrt{t}}e^{-thP_k^{f}G^TC^{-1}G}{\rm d}tE_k^{f} = E_k^{f}T_k
\end{equation}
(see \cite{langeStannat2020} for an in-depth discussion). We introduce the following first-order expansion of the integral term
\begin{equation}
\frac{1}{\sqrt{\pi}}\int_0^{\infty}\frac{e^{-t}}{\sqrt{t}}e^{-thP_k^{f}G^TC^{-1}G}{\rm d}t = {\rm Id} - \frac{h}{2}P_k^{f}G^TC^{-1}G + R_k^{h}.
\end{equation}
Then using \eqref{ESRFmember}, both EAKF and ETKF can be summarized as the following\\
{\bf Algorithm 1.}\label{EAKFETKF} (EAKF/ETKF)
\begin{align}
X_{t_k}^{(i),f} &= X_{t_{k-1}}^{(i),a} + hAX_{t_{k-1}}^{(i),a} + hQ^{\frac{1}{2}}\hat{W}_k^{(i),h},\\
X_{t_k}^{(i),a} &= X_{t_k}^{(i),f} - \frac{h}{2}P_k^{f}G^TC^{-1}GX_{t_k}^{(i),f} - h\left(K_k - \frac{1}{2}P_k^{f}G^TC^{-1}\right)G\bar{x}_k^{f} + K_k\Delta Y_k + R_k^{h}e_i.
\end{align}

\subsection{Whitaker, Hamill (2002).}
In \cite{whitaker2002}, the authors approach the problem of omitting stochastic perturbations in a different way: similar as in the Kalman Filter, the update step of the ensemble mean and the ensemble deviations should be of the form
\begin{align}
\bar{x}_k^{a} &= \bar{x}_k^{f} + K_k\left(\Delta Y_k - hG\bar{x}_k^{f}\right),\\
E_k^{a} &= E_k^{f} + \tilde{K}_k\left( \left(\Delta Y_k\right)^{'} - hGE_k^{f}\right)
\end{align}
with $K_k$ as in (\ref{gain}), where $\tilde{K}_k$ denotes the gain for the update of the ensemble deviations and in case of the stochastic EnKF, $\left(\Delta Y_k\right)^{'}$ denotes the perturbations added to the actual observation $\Delta Y_k$. In the aim of avoiding such perturbations, setting $\left(\Delta Y_k\right)^{'} = 0$ yields
\begin{equation}\label{updateWHdev}
E_k^{a} = \left({\rm Id} - h\tilde{K}_kG\right)E_k^{f}.
\end{equation}
This gives the correct covariance matrix only if $\tilde{K}_k$ solves
\begin{equation}\label{ansatzTildeK}
\left({\rm Id} - h\tilde{K}_kG\right)P_k^{f}\left({\rm Id} - h\tilde{K}_kG\right)^T \overset{!}{=} P_k^{a} = \left({\rm Id} - hK_kG\right)P_k^{f}
\end{equation}
which has the solution
\begin{equation}\label{tildeK}
\tilde{K}_k = P_k^{f}G^T\left(C+hGP_k^{f}G^T\right)^{-\frac{1}{2}}\left(\left(C+hGP_k^{f}G^T\right)^{\frac{1}{2}} + C^{\frac{1}{2}}\right)^{-1}.
\end{equation}
For ease of notation denote in the following
\begin{equation}
\tilde{C}_k:= \left(C+hGP_k^{f}G^T\right)^{-\frac{1}{2}}\left(\left(C+hGP_k^{f}G^T\right)^{\frac{1}{2}} + C^{\frac{1}{2}}\right)^{-1}.
\end{equation}
Thus using (\ref{ESRFmember}), we can deduce the following algorithm:\\
{\bf Algorithm 2.}\label{WH}(Whitaker, Hamill (2002))
\begin{align}
X_{t_k}^{(i),f} &= X_{t_{k-1}}^{(i),a} + hAX_{t_{k-1}}^{(i),a} + hQ^{\frac{1}{2}}\hat{W}_k^{(i),h},\\
X_{t_k}^{(i),a} &= X_{t_k}^{(i),f} +K_k\left(\Delta Y_k - hG\bar{x}_k^{f}\right) - h\tilde{K}_kG\left(X_{t_k}^{(i),f} - \bar{x}_k^{f}\right).
\end{align}

\subsection{Summary.}
Comparing Algorithm 1 and 2, we obtain a unified structure of the update step of the following form
\begin{equation}\label{UnifUpdate}
X_{t_k}^{(i),a} = X_{t_k}^{(i),f} - h\hat{K}_kGX_{t_k}^{(i),f} - h\left(K_k - \hat{K}_k\right)G\bar{x}_k^{f} + K_k \Delta Y_k + \mathcal{R}_k^{h}e_i
\end{equation}
where for EAKF/ETKF we denote
\begin{equation}\label{SummEATKF}
\hat{K}_k := \frac{1}{2}P_k^{f}G^TC^{-1}, \quad \mathcal{R}_k^{h} := R_k^{h}
\end{equation}
and for the unperturbed filter
\begin{equation}\label{SummWH}
\hat{K}_k := \tilde{K}_k, \quad \mathcal{R}_k^{h} = 0.
\end{equation}
\begin{lemma}\label{EstSumm}
The following estimates hold:
\begin{equation}\label{EstimateKSumm}
\left\|\hat{K}_k\right\|\leq \frac{1}{2}\left\|P_k^{f}\right\|\|G\|\left\|C^{-1}\right\|
\end{equation}
and
\begin{equation}\label{EATKF_R}
\left\|\mathcal{R}_k^{h}\right\| \leq \frac{3h^2}{8}\left\|P_k^{f}\right\|^2\left\|G^TC^{-1}G\right\|^2.
\end{equation}
\end{lemma}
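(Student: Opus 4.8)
The plan is to split along the two cases of the definitions (\ref{SummEATKF}) and (\ref{SummWH}), since in each case the two estimates reduce to elementary operator-norm manipulations together with the structural facts already established.

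For (\ref{EstimateKSumm}): in the EAKF/ETKF case $\hat K_k = \tfrac12 P_k^{f}G^TC^{-1}$, so submultiplicativity of the operator norm and $\|G^T\|=\|G\|$ give the claim at once. In the unperturbed (Whitaker--Hamill) case $\hat K_k=\tilde K_k=P_k^{f}G^T\tilde C_k$, so it suffices to show $\|\tilde C_k\|\le\tfrac12\|C^{-1}\|$. I would use that $C+hGP_k^{f}G^T\succeq C\succ 0$, since $hGP_k^{f}G^T$ is positive semidefinite. Antimonotonicity of inversion then gives $\|(C+hGP_k^{f}G^T)^{-1/2}\|=\|(C+hGP_k^{f}G^T)^{-1}\|^{1/2}\le\|C^{-1}\|^{1/2}$, while operator monotonicity of the square root gives $(C+hGP_k^{f}G^T)^{1/2}\succeq C^{1/2}$, hence $(C+hGP_k^{f}G^T)^{1/2}+C^{1/2}\succeq 2C^{1/2}$ and $\|((C+hGP_k^{f}G^T)^{1/2}+C^{1/2})^{-1}\|\le\tfrac12\|C^{-1}\|^{1/2}$. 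Multiplying the two bounds yields $\|\tilde C_k\|\le\tfrac12\|C^{-1}\|$, and then $\|\tilde K_k\|\le\|P_k^{f}\|\,\|G\|\,\|\tilde C_k\|\le\tfrac12\|P_k^{f}\|\,\|G\|\,\|C^{-1}\|$.

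For (\ref{EATKF_R}): in the Whitaker--Hamill case $\mathcal R_k^{h}=0$ and there is nothing to prove. In the EAKF/ETKF case $\mathcal R_k^{h}=R_k^{h}$, and I would start from the integral representation. Setting $B:=P_k^{f}G^TC^{-1}G$ and using $\tfrac1{\sqrt\pi}\int_0^\infty t^{-1/2}e^{-t}\,{\rm d}t=1$ and $\tfrac1{\sqrt\pi}\int_0^\infty t^{1/2}e^{-t}\,{\rm d}t=\tfrac12$, the definition of $R_k^{h}$ gives
\[
R_k^{h}=\frac1{\sqrt\pi}\int_0^\infty\frac{e^{-t}}{\sqrt t}\Bigl(e^{-thB}-{\rm Id}+thB\Bigr)\,{\rm d}t .
\]
The second-order Taylor identity $e^{-thB}-{\rm Id}+thB=(thB)^2\int_0^1(1-s)e^{-sthB}\,{\rm d}s$ reduces the task to bounding $\|e^{-sthB}\|$. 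Here I would exploit that $B$ is self-adjoint and positive semidefinite for the inner product induced by $(P_k^{f})^{-1}$ --- equivalently, $B$ is similar (via $\sqrt{P_k^{f}}$) to the symmetric positive semidefinite matrix $\sqrt{P_k^{f}}G^TC^{-1}G\sqrt{P_k^{f}}$ --- so that $\|e^{-sthB}\|\le 1$. Combining this with $\int_0^1(1-s)\,{\rm d}s=\tfrac12$, $\tfrac1{\sqrt\pi}\int_0^\infty t^{3/2}e^{-t}\,{\rm d}t=\tfrac34$, and $\|B\|^2\le\|P_k^{f}\|^2\|G^TC^{-1}G\|^2$ produces exactly the constant $\tfrac38$.

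The main obstacle is precisely this last point: $B=P_k^{f}G^TC^{-1}G$ is not symmetric, so the functional-calculus control of $e^{-sthB}$ (and of the associated Taylor remainder) must be carried out in the $(P_k^{f})^{-1}$-weighted geometry --- and, when $P_k^{f}$ is rank-deficient, on its range using the pseudo-inverse --- and one has to check that this does not inflate the final operator-norm bound. Everything else is routine: the moment evaluations for the probability measure $\tfrac1{\sqrt\pi}t^{-1/2}e^{-t}\,{\rm d}t$, the submultiplicativity steps, and (if one prefers to absorb terms of lower order in $h$ into a cruder tail estimate of the exponential) the uniform bound on $\|P_k^{f}\|$ from Lemma \ref{boundPf}.
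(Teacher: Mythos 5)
Your treatment of \eqref{EstimateKSumm} is correct and is essentially the paper's own argument: the EAKF/ETKF case is submultiplicativity, and for the unperturbed filter you use $C^{\frac12}\leq\left(C+hGP_k^{f}G^T\right)^{\frac12}$ so that $\left(C+hGP_k^{f}G^T\right)^{\frac12}+C^{\frac12}\geq 2C^{\frac12}$, giving $\|\tilde{C}_k\|\leq\frac12\|C^{-\frac12}\|^2=\frac12\|C^{-1}\|$. For \eqref{EATKF_R}, however, there is a genuine gap at exactly the point you flag as the ``main obstacle'': with $\Theta:=G^TC^{-1}G$ and $B:=P_k^{f}\Theta$, the inequality $\|e^{-sthB}\|\leq 1$ is in general false in the Euclidean operator norm. $B$ is only \emph{similar} to the symmetric positive semidefinite matrix $\sqrt{P_k^{f}}\,\Theta\,\sqrt{P_k^{f}}$, and similarity does not preserve the operator norm: the contraction property holds in the $\left(P_k^{f}\right)^{-1}$-weighted norm, and converting back to the unweighted norm costs the condition number of $\sqrt{P_k^{f}}$, which is not controlled by Lemma \ref{boundPf} (that lemma bounds $\|P_k^{f}\|$ only from above) and is not even finite when $P_k^{f}$ is rank-deficient. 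So if you separate the estimate into $\|e^{-sthB}\|\cdot\|B\|^2$, the ``check that the weighted geometry does not inflate the bound'' is precisely what fails.

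The repair --- and this is what the paper actually does --- is never to bound the exponential alone, but to pair it with one of the two factors of $P_k^{f}$ supplied by $B^2=\left(P_k^{f}\Theta\right)^2$. From the power series one has, without any inverse or pseudo-inverse and also for singular $P_k^{f}$,
\begin{equation*}
e^{-rhP_k^{f}\Theta}P_k^{f}=\sqrt{P_k^{f}}\,e^{-rh\sqrt{P_k^{f}}\Theta\sqrt{P_k^{f}}}\,\sqrt{P_k^{f}},
\qquad\text{hence}\qquad
e^{-rhP_k^{f}\Theta}\left(P_k^{f}\Theta\right)^2=\sqrt{P_k^{f}}\,e^{-rh\sqrt{P_k^{f}}\Theta\sqrt{P_k^{f}}}\,\sqrt{P_k^{f}}\,\Theta P_k^{f}\Theta ,
\end{equation*}
whose norm is at most $\|P_k^{f}\|^2\|\Theta\|^2$ because the middle exponential (of a symmetric negative semidefinite matrix) is a genuine contraction. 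Inserting this into your Taylor-remainder identity and using $\int_0^1(1-s)\,{\rm d}s=\frac12$ together with $\frac{1}{\sqrt{\pi}}\int_0^\infty t^{\frac32}e^{-t}\,{\rm d}t=\frac34$ yields exactly $\frac{3h^2}{8}\|P_k^{f}\|^2\|G^TC^{-1}G\|^2$; the paper's proof is the same computation with the remainder written as the double integral $\int_0^t\int_0^s e^{-rhP_k^{f}\Theta}\,{\rm d}r\,{\rm d}s$, which is equivalent to your $(1-s)$-form. With this one step corrected, your argument coincides with the paper's.
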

\begin{proof}
On \eqref{EstimateKSumm}: for EAKF/ETKF the estimate is straightforward. For the unperturbed filter use
\begin{equation}
C^{\frac{1}{2}} \leq \left(C+hGP_k^{f}G^T\right)^{\frac{1}{2}}
\end{equation}
to deduce
\begin{equation}
\left\|\tilde{C}_k\right\|\leq \left\|\left(C+hGP_k^{f}G^T\right)^{-\frac{1}{2}}\right\|\left\|\left(\left(C+ h GP_k^{f}G^T\right)^{\frac{1}{2}} + C^{\frac{1}{2}}\right)^{-1}\right\| \leq \frac{1}{2}\left\|C^{-\frac{1}{2}}\right\|^2
\end{equation}
which gives the claim.\\
On \eqref{EATKF_R}: the claim trivially holds true in the case of the unperturbed filter. For EAKF/ETKF observe that with $\Theta:= G^TC^{-1}G$
\begin{align}
R_k^{h} &= \frac{1}{\sqrt{\pi}}\int_0^{\infty} \frac{e^{-t}}{\sqrt{t}}\left(e^{-thP_k^{f}\Theta}-{\rm Id} + thP_k^{f}\Theta\right){\rm d}t\\
&=\frac{1}{\sqrt{\pi}}\int_0^{\infty} \frac{e^{-t}}{\sqrt{t}}\left(-\int_0^te^{-shP_k^{f}\Theta}hP_k^{f}\Theta{\rm d}s +thP_k^{f}\Theta\right){\rm d}t\\
&=\frac{1}{\sqrt{\pi}}\int_0^{\infty} \frac{e^{-1}}{\sqrt{t}} \left(\int_0^t \int_0^s e^{-rhP_k^{f}\Theta} {\rm d}r {\rm d}s\right)\left(hP_k^{f}\Theta\right)^2{\rm d}t\\
&= \frac{1}{\sqrt{\pi}}\int_0^{\infty}\frac{e^{-t}}{\sqrt{t}}\left(\int_0^t\int_0^s \sqrt{P_k^{f}}e^{-rh\sqrt{P_k^{f}}\Theta\sqrt{P_k^{f}}}\sqrt{P_k^{f}} {\rm d}r {\rm d}s\right) h^2\Theta P_k^{f}\Theta{\rm d}t.
\end{align}
The claim then follows by using $\left\|e^{-th\sqrt{P_k^{f}}\Theta\sqrt{P_k^{f}}}\right\| \leq 1$.
\end{proof}

\subsection{Non-uniqueness of the transformations.}\label{RemNonUnique}
As has been pointed out in \cite{tippett2003}, the above transformations are not unique. Indeed, for an orthogonal matrix $\mathcal{U}_k$ note that, for instance, in case of the ETKF the modified transformation $\hat{E}_k := E_k^{f}T_k\mathcal{U}_k$ also yields the correct covariance matrix since
\begin{equation}
\hat{P}_k := \frac{1}{M-1}\hat{E}_k\hat{E}_k^T = \frac{1}{M-1}E_k^{f}T_k\mathcal{U}_k \mathcal{U}_k^T T_k^T \left(E_k^{f}\right)^T = \frac{1}{M-1}E_k^{f}T_kT_k^T \left(E_k^{f}\right)^T = P_k^{a}.
\end{equation}
Thus post-multiplying the transformed ensemble with an orthogonal matrix does not change the resulting covariance matrix. This issue was further exploited in \cite{wang2004} and \cite{livings2008}. In the latter, the authors elaborate more conditions on the matrix $\mathcal{U}_k$: for ${\bf 1} = (1,..., 1)^T$ note that it holds $E_k^{f} {\bf 1} = 0$. A transformation $\tau$ is called mean-preserving if after applying the transformation it still holds true $\tau\left(E_k^{f}\right){\bf 1} = 0$. This is a desirable property since it is needed in the update step (\ref{ESRFmember}). The EAKF, the filter by Whitaker and Hamill and, due to Lemma \ref{equiAT}, also the ETKF are mean-preserving. Thus an orthogonal post-multiplier $\mathcal{U}_k$ is appropriate in this sense, if it does not violate the mean-preserving property, i.e. satisfies $\tau\left(E_k^{f}\right)\mathcal{U}_k{\bf 1} = 0$. This is clearly the case if ${\bf 1}$ is an eigenvector of $\mathcal{U}_k$. In the following section, after conducting the continuous time limit analysis for the unmodified algorithms, we further elaborate on a possible extension to orthogonal transformations in Section \ref{nonUnique}.

\section{Continuous time limit II: ensemble members.}\label{SectionContLimit}\\
\noindent
Throughout this section, we assume that the deterministic model perturbations $\hat{W}^{(i),h}$ satisfy Assumption 1. First observe that formally taking the continuous time limit in Algorithm 1 then yields the following coupled system of differential equations with suitably defined model perturbations $\hat{W}_t^{(i)}$:
\begin{equation}\label{contFilterAT}
{\rm d}X_t^{(i)} = AX_t^{(i)}{\rm d}t + Q^{\frac{1}{2}}\hat{W}_t^{(i)}{\rm d} t + P_tG^TC^{-1}\left({\rm d}Y_t - \frac{1}{2}G\left(X_t^{(i)}+\bar{x}_t\right){\rm d}t\right).
\end{equation}
\begin{example}\label{exampleContW}
In \cite{langeStannat2019}, we were able to show a continuous time limit result for the case of perturbations of the form (\ref{ReichW}) yielding (\ref{contFilterAT}) with model perturbations
\begin{equation}\label{ReichWCont}
\hat{W}_t^{(i)} := \frac{1}{2}Q^{\frac{1}{2}}P_t^{-1}\left(X_t^{(i)} - \bar{x}_t\right).
\end{equation}
\end{example}
\noindent
Assuming that the processes $\left(\hat{W}_t^{(i)}\right)_{t \geq 0}$ are continuous and fulfill\\
{\bf Assumption 2.}\label{AssumContW}
{\em It holds:}
\begin{itemize}
\item $\frac{1}{M-1}\sum_{i=1}^M \left(X_t^{(i)} - \bar{x}_t\right)\left(\hat{W}_t^{(i)} - \hat{w}_t\right)^TQ^{\frac{1}{2}} = \frac{1}{2} Q$
\item {\em w.l.o.g. $\hat{W}_t^{(i)}$ are centered.}
\end{itemize}
\noindent
one can easily check that this yields the correct structure of first and second moment, i.e. corresponding to the processes (\ref{contFilterAT}), the ensemble mean $\bar{x}$  and the covariance matrix $P$ satisfy the Kalman-Bucy Filter equations (\ref{KBFmean}) and (\ref{Ricc}), respectively. Further we obtain that (\ref{ReichWCont}) is an exemplary choice of such perturbations.\\
\noindent
However, it is not clear whether Assumption 1 and Assumption 2 already yield convergence of the model perturbations. We therefore further impose\\
{\bf Assumption 3.}\label{approxW}
{\em There exists a constant $R_T > 0$ such that}
\begin{equation}
\sum_{i=1}^M \left\|\hat{W}_{\nu_{+}(t)}^{(i),h} - \hat{W}_t^{(i)}\right\|^2 \leq R_T\left(h^2 + \sum_{i=1}^M \left\|X_{\eta(t)}^{(i),a} - X_t^{(i)}\right\|^2\right).
\end{equation}
\begin{example}
Recall Examples \ref{exampleDiscrW} and \ref{exampleContW}. For these choices of model perturbation it holds
\begin{align}
&\sum_{i=1}^M \left\|\hat{W}_{\nu_{+}(t)}^{(i),h} - \hat{W}_t^{(i)}\right\|^2\notag\\
&= \sum_{i=1}^M \left\|\frac{1}{2}Q\left(\left(P_{\nu(t)}^{a}\right)^{-1}\left(X_{\eta(t)}^{(i),a} - \bar{x}_{\nu(t)}^{a}\right) - P_t^{-1}\left(X_t^{(i)}-\bar{x}_t\right)\right)\right\|^2\notag\\
&\lesssim \|Q\|^2\left((M-1)\left\|\left(P_{\nu(t)}^{a}\right)^{-1} - P_t^{-1}\right\|^2\mathcal{V}_t + \left\|\left(P_{\nu(t)}^{a}\right)^{-1}\right\|^2 \sum_{i=1}^M \left\|X_{\eta(t)}^{(i),a} - X_t^{(i)} \right\|^2\right)\notag\\
&\leq R_T\left(h^2 + \sum_{i=1}^M \left\|X_{\eta(t)}^{(i),a} - X_t^{(i)}\right\|^2\right)
\end{align}
by Theorem \ref{covConvergenceESRF}, Appendix \ref{appProofReichCov} and boundedness of $\left\|P_t^{-1}\right\|$ and $\mathcal{V}_t$ on $[0,T]$ (as shown in \cite{langeStannat2019}).
\end{example}

\noindent 
In the following, we assume that there exists a pathwise unique strong solution $X^{(i)}$ to (\ref{contFilterAT}) which is almost surely continuous and satisfies
\begin{equation}\label{boundContMem}
\sup_{t\in [0,T]} \mathbb{E}\left[\left\|X_t^{(i)}\right\|^2\right] < \infty.
\end{equation}
In case of Example \ref{exampleContW}, see the argumentation in \cite{deWiljes2018} on existence of such solutions. By using (\ref{formY}) and assumption (\ref{RefTraj}), we obtain that
\begin{equation}\label{contLimXCont}
\mathbb{E}\left[\sup_{t \in [0,T]} \sum_{i=1}^M \left\|X_t^{(i)} - X_{\eta(t)}^{(i)}\right\|^2\right] \in O(h).
\end{equation}
Then under the above assumptions, the main result of this paper now reads as follows:
\begin{theorem}\label{mainResult}
Consider Algorithm 1 or Algorithm 2, let $\left(X_t^{(i)}\right)_{t \in [0,T]}$ be the unique strong solution to (\ref{contFilterAT}) and let $\|P_0^{a}\|$ be bounded uniformly in $\omega$. If
\begin{equation}
\mathbb{E}\left[\sum_{i=1}^M\left\|X_0^{(i),a}-X_0^{(i)}\right\|^2\right] \in O(h),
\end{equation}
then it holds
\begin{equation}\label{senseOfConv}
 \mathbb{E}\left[\sup_{t \in [0,T]}\sum_{i=1}^M \left\|X_{\eta(t)}^{(i),a}-X_t^{(i)}\right\|^2\right] \in O(h).
\end{equation}
\end{theorem}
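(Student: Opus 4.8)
The plan is to set up a Gr\"onwall-type estimate for the quantity $\Phi(t) := \mathbb{E}\left[\sup_{r \in [0,t]}\sum_{i=1}^M \left\|X_{\eta(r)}^{(i),a}-X_r^{(i)}\right\|^2\right]$ and to show it satisfies an integral inequality of the form $\Phi(t) \lesssim \Phi(0) + O(h) + \int_0^t \Phi(s)\,{\rm d}s$. First I would write both the discrete update (in the unified form \eqref{UnifUpdate}) and the continuous equation \eqref{contFilterAT} in integrated form over $[0,\eta(t)]$, so that the difference $X_{\eta(t)}^{(i),a}-X_t^{(i)}$ is expressed as the initial difference, plus a drift integral, plus a stochastic integral against ${\rm d}Y_s$ (equivalently, using \eqref{formY}, a bounded-variation part driven by $X^{\text{ref}}$ and a martingale part driven by ${\rm d}V_s$), plus explicit $O(h)$ remainder terms. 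The bounded-variation terms are handled pathwise by Cauchy--Schwarz in the form already used in the proof of Theorem \ref{meanConvergenceESRF} (pulling out a factor $\eta(t) \le T$), and the martingale term is handled by the $L^p$-maximal inequality, converting $\mathbb{E}[\sup_t \|\int_0^{\eta(t)}\cdots{\rm d}V_s\|^2]$ into $\int_0^T \mathbb{E}[\|\cdots\|^2]\,{\rm d}s$.

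The key reductions along the way are: (i) replace $X_t^{(i)}$ by $X_{\eta(t)}^{(i)}$ up to $O(h)$ using \eqref{contLimXCont}; (ii) replace the discrete gains $K_k$ and $\hat K_k$ by their continuous counterparts $P_t G^T C^{-1}$ and the appropriate halved/limiting versions, with the error being $O(h)$ uniformly --- for $K_k$ this is exactly \eqref{GainEstimate}, and for $\hat K_k$ one argues analogously using Lemma \ref{boundPf} and Theorem \ref{covConvergenceESRF} (for the Whitaker--Hamill case one expands $\tilde C_k$ and checks $\tilde K_k = \tfrac12 P_{\nu_+(t)}^f G^T C^{-1} + O(h)$; for EAKF/ETKF $\hat K_k = \tfrac12 P_k^f G^T C^{-1}$ on the nose and the remainder $\mathcal{R}_k^h e_i$ is $O(h^2)$ entrywise by Lemma \ref{EstSumm}, so $\sum_i \|\mathcal{R}_k^h e_i\|^2 = O(h^4)$); (iii) replace $P_{\nu(t)}^f$ (or $P_{\nu(t)}^a$) appearing in those gains by $P_t$, again $O(h)$ by Theorem \ref{covConvergenceESRF}; (iv) control the ensemble-mean discrepancy $\|\bar x_{\nu(t)}^a - \bar x_t\|$ appearing in the $(K_k-\hat K_k)G\bar x_k^f$ term using Theorem \ref{meanConvergenceESRF}, noting $\|\bar x_{\nu(t)}^a - \bar x_t\|^2 \le \tfrac1M \sum_i \|X_{\eta(t)}^{(i),a}-X_t^{(i)}\|^2 + O(h)$ by Jensen, so this feeds back into $\Phi$; and (v) handle the model perturbation term $Q^{1/2}(\hat W_{\nu_+(t)}^{(i),h} - \hat W_t^{(i)})$ via Assumption 3, which bounds $\sum_i \|\hat W_{\nu_+(t)}^{(i),h}-\hat W_t^{(i)}\|^2$ by $R_T(h^2 + \sum_i\|X_{\eta(t)}^{(i),a}-X_t^{(i)}\|^2)$ --- again feeding back into $\Phi$. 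Each error term must be shown to be genuinely $O(h)$ in the $\mathbb{E}[\sup]$ sense; here the second-moment bounds \eqref{boundContMem}, \eqref{RefTraj}, Lemma \ref{BoundMean}, and the analogous uniform $L^2$-bound on $\sum_i\|X_{\eta(t)}^{(i),a}\|^2$ (which follows from Lemma \ref{BoundMean} together with boundedness of $\mathcal{V}$, or can be established by a preliminary Gr\"onwall argument) are used to turn products like $\|K_{\nu_+(s)}-P_sG^TC^{-1}\|^2 \cdot \|X_s^{\text{ref}}\|^2$ into $O(h^2)$ in expectation.

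After these substitutions one arrives at
\begin{equation*}
\Phi(t) \lesssim \Phi(0) + h + \int_0^t \left(1 + \mathbb{E}\left[\|P_s\|^2\right]\right)\Phi(s)\,{\rm d}s,
\end{equation*}
and since $\|P_s\|$ is bounded on $[0,T]$ the integrating factor is bounded; Gr\"onwall's inequality together with the hypothesis $\Phi(0) \in O(h)$ then gives $\Phi(T) \in O(h)$, which is \eqref{senseOfConv}. A point of care is that $\Phi(s)$ rather than $\mathbb{E}[\sum_i\|X_{\eta(s)}^{(i),a}-X_s^{(i)}\|^2]$ must appear under the integral so that Gr\"onwall closes; this is why one works throughout with the running supremum and uses the maximal inequality on the martingale part rather than just the It\^o isometry.

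I expect the main obstacle to be item (ii)/(iii) for the Whitaker--Hamill filter: unlike EAKF/ETKF, where the update is already written with an explicit $O(h^2)$ remainder $\mathcal{R}_k^h$, here one must carefully expand $\tilde K_k = P_k^f G^T \tilde C_k$ with $\tilde C_k = (C+hGP_k^fG^T)^{-1/2}((C+hGP_k^fG^T)^{1/2}+C^{1/2})^{-1}$ and verify that $h\tilde K_k G X_{t_k}^{(i),f}$ and $\tfrac{h}{2}P_tG^TC^{-1}G X_t^{(i)}$ differ by a term that is $O(h^2)\cdot(\text{bounded in } L^2)$ plus $O(h)\cdot\|X_{\eta(t)}^{(i),a}-X_t^{(i)}\|$ --- i.e. that all the square-root and matrix-inverse expansions are uniform in $k$, which rests on Lemma \ref{boundPf} and the positive-definiteness of $C$. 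The bookkeeping of which error terms are $O(h)$ versus $O(h^2)$, and ensuring that every term that is not manifestly small is proportional to $\sum_i\|X_{\eta(t)}^{(i),a}-X_t^{(i)}\|^2$ (so that it can be absorbed into the Gr\"onwall integral rather than spoiling the $O(h)$ rate), is the delicate part; the abstract structure of the argument is otherwise the same $L^2$-Gr\"onwall scheme used in the proof of Theorem \ref{meanConvergenceESRF}.
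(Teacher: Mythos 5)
Your proposal follows essentially the same route as the paper's proof: the same unified update \eqref{UnifUpdate}, the same decomposition of the drift into gain-difference terms controlled via \eqref{GainEstimate}, Lemma \ref{boundPf}, Lemma \ref{EstSumm} and Theorem \ref{covConvergenceESRF} (including the square-root expansion of $\tilde{C}_k$ for the Whitaker--Hamill case, which the paper handles exactly as you anticipate via a lower bound on $\left(C+hGP_k^{f}G^T\right)^{\frac{1}{2}}+C^{\frac{1}{2}}$), Assumption 3 and \eqref{contLimXCont} to close the loop, the $L^p$-maximal inequality for the martingale part, the uniform $L^2$-bound on the ensemble (Lemma \ref{boundMemAT}, proved by the preliminary Gr\"onwall argument you mention), and a final Gr\"onwall argument on the running supremum. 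This matches the paper's argument in both structure and the identification of the delicate steps.
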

\medskip
\noindent
The proof of Theorem \ref{mainResult} will now be given in the following sections.
\subsection{Preliminaries.}\label{contLimPre}
\noindent
By Assumption 1 and using the same analysis as in \cite{deWiljes2018}, one can show that it holds (recall that $\hat{W}_k^{(i)}$ are centred)
\begin{align}\label{boundW}
\sum_{i=1}^M \left\|Q^{\frac{1}{2}}\hat{W}_k^{(i),h}\right\|^2 &\leq \sqrt{M}(M-1) \left\|\frac{1}{M-1}\sum_{i=1}^M Q^{\frac{1}{2}}\left(\hat{W}_k^{(i),h}-\hat{w}_k^{h}\right)\left(\hat{W}_k^{(i),h}-\hat{w}_k^{h}\right)^TQ^{\frac{1}{2}}\right\|_F \notag\\
&\leq \sqrt{M}(M-1)\kappa.
\end{align}
With the above, we obtain the following result:
\begin{lemma}\label{boundMemAT}
For both Algorithm 1 and Algorithm 2 it holds
\begin{equation}
\sup_{t \in [0,T]} \mathbb{E}\left[\sum_{i=1}^M \left\|X_{\eta(t)}^{(i),a}\right\|^2\right] < \infty.
\end{equation}
\end{lemma}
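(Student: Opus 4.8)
The plan is to sidestep any recursive (Gronwall-type) estimate on the ensemble and instead reduce the bound to the two quantitative facts already available: the uniform bound on the forecast covariance from Lemma~\ref{boundPf} and the uniform $L^2$-bound on the ensemble mean from Lemma~\ref{BoundMean}. The starting point is the elementary decomposition
\begin{equation*}
\sum_{i=1}^M \bigl\| X_{t_k}^{(i),a}\bigr\|^2 = \sum_{i=1}^M \bigl\| X_{t_k}^{(i),a} - \bar{x}_k^{a}\bigr\|^2 + M\bigl\|\bar{x}_k^{a}\bigr\|^2 = (M-1)\,\mathrm{tr}\bigl(P_k^{a}\bigr) + M\bigl\|\bar{x}_k^{a}\bigr\|^2,
\end{equation*}
valid since $\sum_i (X_{t_k}^{(i),a}-\bar{x}_k^{a})=0$ and, by \eqref{defV}, $\mathrm{tr}(P_k^{a}) = \mathcal{V}_k^{a}$, where $P_k^{a}=\tfrac{1}{M-1}\sum_i (X_{t_k}^{(i),a}-\bar{x}_k^{a})(X_{t_k}^{(i),a}-\bar{x}_k^{a})^T$ is the empirical analysis covariance matrix. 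After taking expectations it thus suffices to bound $\mathbb{E}[\mathrm{tr}(P_k^{a})]$ and $\mathbb{E}[\|\bar{x}_k^{a}\|^2]$ uniformly in $k=1,\dots,L$.

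For the covariance term I would invoke the defining property of the ESRF transformations in both Algorithm~1 and Algorithm~2: the analysis ensemble is constructed precisely so that its empirical covariance matches the Kalman update, i.e. $P_k^{a} = ({\rm Id}-hK_kG)P_k^{f}$ (this is \eqref{ESRFupdate}; for the unperturbed filter it is \eqref{ansatzTildeK}, and for EAKF/ETKF it follows from the integral representation \eqref{IntegralRep}, so that the first-order remainder $\mathcal{R}_k^{h}$ in the unified update \eqref{UnifUpdate} does not affect it). Since $P_k^{a}$ is symmetric positive semi-definite, $\mathrm{tr}(P_k^{a})\le d\,\|P_k^{a}\| \le d\,(1+h\|K_k\|\,\|G\|)\,\|P_k^{f}\|$, and Lemma~\ref{boundPf} bounds $\|P_k^{f}\|$ and $\|K_k\|$ uniformly in $k$ by deterministic constants (using, as in Theorem~\ref{mainResult}, that $\|P_0^{a}\|$ is bounded uniformly in $\omega$). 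Hence $\mathrm{tr}(P_k^{a})$ is bounded by a deterministic constant, uniformly in $k$. For the mean term, Lemma~\ref{BoundMean} gives directly $\sup_{t\in[0,T]}\mathbb{E}[\|\bar{x}_{\nu(t)}^{a}\|^2]<\infty$. Plugging both bounds into the displayed identity yields $\sup_{t\in[0,T]}\mathbb{E}[\sum_i \|X_{\eta(t)}^{(i),a}\|^2]<\infty$, which is the claim.

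There is no real analytic obstacle along this route; the only point requiring a little care is the correct use of the ESRF ``exact covariance'' identity, in particular checking that the unified update \eqref{UnifUpdate} --- including the $\mathcal{R}_k^{h}e_i$ term --- still produces an analysis ensemble with empirical covariance exactly $P_k^{a}$, which is immediate because $\mathcal{R}_k^{h}$ is merely the Taylor remainder of the transformation $A_kE_k^{f}$ (and vanishes for Whitaker--Hamill). For completeness I note the alternative, more computational route: substitute the forecast step into \eqref{UnifUpdate}, expand $\|X_{t_k}^{(i),a}\|^2$, sum over $i$, and run a discrete Gronwall argument using \eqref{boundW}, the estimates of Lemmas~\ref{boundPf} and~\ref{EstSumm}, and \eqref{formY}--\eqref{RefTraj}. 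There the genuine difficulty would be the cross term between the part of $X_{t_k}^{(i),a}$ built from data up to time $t_{k-1}$ and the observation increment $\Delta Y_k$: its Brownian component is only $O(h^{1/2})$ in $L^2$, so a naive Young's inequality leaves an $O(h^{-1})$ per-step residue, and one must instead annihilate this cross term by conditioning (using that $P_k^{f}$ and $K_k$ are functions of the ensemble up to $t_{k-1}$, hence independent of $V_{t_k}-V_{t_{k-1}}$). The covariance/mean decomposition avoids this altogether, which is why I would present that argument.
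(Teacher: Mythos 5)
Your argument is correct, and it takes a genuinely different route from the paper. The paper proves Lemma \ref{boundMemAT} by the ``computational'' route you mention only in passing: it substitutes the forecast step into the unified update \eqref{UnifUpdate}, expands $\sum_i\|X_{t_{k+1}}^{(i),a}\|^2$, controls the coefficients via Lemmas \ref{boundPf} and \ref{EstSumm} and the bound \eqref{boundW}, treats the observation cross term exactly as you predict --- through \eqref{formY}, \eqref{RefTraj} and the estimate \eqref{EmeanObs}, where the Brownian part of $\Delta Y_{k+1}$ is harmless because $K_{k+1}$ and $\bar{x}_k^{a}$ are built from data up to $t_k$ --- and closes with a discrete Gronwall argument. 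Your decomposition $\sum_i\|X_{t_k}^{(i),a}\|^2=(M-1)\,\mathrm{tr}(P_k^{a})+M\|\bar{x}_k^{a}\|^2$ bypasses all of this: the covariance part is deterministic and uniformly bounded because both algorithms reproduce $P_k^{a}=({\rm Id}-hK_kG)P_k^{f}$ exactly (for Algorithm 1 because $R_k^{h}$ is the exact Taylor remainder of $A_kE_k^{f}$, for Algorithm 2 by \eqref{ansatzTildeK}), and the mean part is exactly the quantity controlled in Lemma \ref{BoundMean}, which applies because both transformations are mean-preserving so the empirical mean of the analysis ensemble coincides with the $\bar{x}_k^{a}$ of that recursion --- a point you use implicitly and should state, since it is what licenses both the vanishing of the cross terms and the appeal to Lemma \ref{BoundMean}. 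You also correctly flag that the constant $p_T^{*,f}$ requires $\|P_0^{a}\|$ bounded uniformly in $\omega$, which is among the hypotheses of Theorem \ref{mainResult} and is equally (if implicitly) used in the paper's proof. What each approach buys: yours is shorter, avoids Gronwall and the observation cross term entirely, and isolates exactly which structural facts (exact covariance identity, mean preservation) carry the bound; the paper's recursion-based argument is more robust, since it does not rely on the covariance update being exact and therefore transfers with little change to settings where that identity only holds up to $O(h^{2})$ (e.g.\ the Sakov--Oke-type filter discussed in Section \ref{SectionDiscuss}) and to the nonlinear extension of Section \ref{SubsectionExt}, where Lemma \ref{boundPf} is not available.
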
\\
For the proof see Appendix \ref{appProofBoundMem}.

\subsection{The continuous time limit.}\label{secContLim}
\noindent
Using \eqref{UnifUpdate}, observe that the difference process satisfies
\begin{align}
&X_{\eta(t)}^{(i),a} - X_{\eta(t)}^{(i)} \\
&= X_0^{(i),a} - X_0^{(i)}\\
&\hspace{0.5cm} + \int_0^{\eta(t)} A\left(X_{\eta(s)}^{(i),a} - X_s^{(i)}\right) + Q^{\frac{1}{2}}\left(\hat{W}_{\nu_+(s)}^{(i),h} - \hat{W}_s^{(i)}\right)\notag\\
&\hspace{1cm} + \left(K_{\nu_+(s)} - P_sG^TC^{-1}\right)GX_s^{\text{ref}}\\
&\hspace{1cm} - \left(\hat{K}_{\nu_{+}(s)}GX_{\eta_{+}(s)}^{(i),f} + \left(K_{\nu_{+}(s)}-\hat{K}_{\nu_{+}(s)}\right)G\bar{x}_{\nu_{+}(s)} - \frac{1}{2}P_sG^TC^{-1}G\left(X_s^{(i)}+\bar{x}_s\right)\right){\rm d}s\notag\\
&\hspace{0.5cm} + \int_0^{\eta(t)} \left(K_{\nu_+(s)} - P_sG^TC^{-1}\right)C^{\frac{1}{2}}{\rm d}V_s +  \mathcal{R}_{\nu(t)}^{h}.
\end{align}
With
\begin{align*}
&\hat{K}_{\nu_{+}(s)}GX_{\eta_{+}(s)}^{(i),f} + \left(K_{\nu_{+}(s)}-\hat{K}_{\nu_{+}(s)}\right)G\bar{x}_{\nu_{+}(s)} - \frac{1}{2}P_sG^TC^{-1}G\left(X_s^{(i)}+\bar{x}_s\right)\\
&= \hat{K}_{\nu_{+}(s)}G\left(X_{\eta_{+}(s)}^{(i),f} - X_s^{(i)}\right) + \left(\hat{K}_{\nu_{+}(s)}-\frac{1}{2}P_sG^TC^{-1}\right)GX_s^{(i)} \\
&\hspace{0.5cm}+ \left(K_{\nu_{+}(s)}-\hat{K}_{\nu_{+}(s)}\right)G\left(\bar{x}_{\nu_{+}(s)}^{f}-\bar{x}_s\right)+\left(K_{\nu_{+}(s)}-\hat{K}_{\nu_{+}(s)}-\frac{1}{2}P_sG^TC^{-1}\right)G\bar{x}_s
\end{align*}
and the Cauchy-Schwarz-inequality we estimate
\begin{align}
&\sum_{i=1}^M \left\|X_{\eta(t)}^{(i),a} - X_{\eta(t)}^{(i)}\right\|^2\notag\\
&\lesssim \sum_{i=1}^M \left\|X_0^{(i),a} - X_0^{(i)}\right\|^2\notag\\
&\hspace{0.5cm} + \eta(t)\int_0^{\eta(t)} \|A\|^2 \sum_{i=1}^M \left\|X_{\eta(s)}^{(i),a} - X_s^{(i)}\right\|^2 + \|Q\| \sum_{i=1}^M \left\|\hat{W}_{\nu_{+}(s)}^{(i)} - \hat{W}_s^{(i)}\right\|^2\notag\\
&\hspace{2cm} + M\left\|K_{\nu_{+}(s)}-P_sG^TC^{-1}\right\|^2\|G\|^2\left\|X_s^{\text{ref}}\right\|^2\notag\\
&\hspace{2cm} + \left(\left\|\hat{K}_{\nu_{+}(s)}\right\|^2+\left\|K_{\nu_{+}(s)}-\hat{K}_{\nu_{+}(s)}\right\|^2\right)\|G\|^2\sum_{i=1}^M\left\|X_{\eta_{+}(s)}^{(i),f}-X_s^{(i),s}\right\|^2 \notag\\
&\hspace{2cm}+ \left(\left\|\hat{K}_{\nu_{+}(s)}-\frac{1}{2}P_sG^TC^{-1}\right\|^2 \right.\notag\\
&\hspace{3cm}\left.+ \left\|K_{\nu_{+}(s)}-\hat{K}_{\nu_{+}(s)}-\frac{1}{2}P_sG^TC^{-1}\right\|^2\right)\|G\|^2\sum_{i=1}^{\infty} \left\|X_s^{(i)}\right\|^2{\rm d}s\notag\\
&\hspace{0.5cm} + M\left\|\int_0^{\eta(t)}\left(K_{\nu_{+}(s)}-P_sG^TC^{-1}\right)C^{\frac{1}{2}}{\rm d}V_s\right\|^2 + M\left\|\mathcal{R}_{\nu(t)}^{h}\right\|^2
\end{align}
\noindent
Recall from the proof of Theorem \ref{meanConvergenceESRF} that it holds
\begin{equation}
\mathbb{E}\left[\left\|K_{\nu_+(s)} - P_sG^TC^{-1}\right\|^2\|G\|^2\left\|X_s^{\text{ref}}\right\|^2\right] \in O(h^2)
\end{equation}
and
\begin{equation}
\mathbb{E}\left[\sup_{t \in [0,T]} \left\|\int_0^{\eta(t)} \left(K_{\nu_+(s)} - P_sG^TC^{-1}\right)C^{\frac{1}{2}}{\rm d}V_s\right\|^2\right] \in O(h^2).
\end{equation}
\noindent
Furthermore by Lemma \ref{boundPf} and Lemma \ref{EstSumm}, $\left\|K_k\right\|$ and $\left\|\hat{K}_k\right\|$ are uniformly bounded in $k$, and again from Lemma \ref{EstSumm} we obtain
\begin{equation}
\left\|\mathcal{R}_k^{h}\right\| \in O(h^2)
\end{equation}
uniformly in $k$. It remains to investigate the differences of the Kalman gains. We claim that it holds
\begin{equation}
\sup_{t \in [0,T]}\left\|\hat{K}_{\nu_{+}(t)} - \frac{1}{2}P_tG^TC^{-1}\right\| \in O(h)
\end{equation}
and
\begin{equation}
\sup_{t \in [0,T]}\left\|K_{\nu_{+}(t)}-\hat{K}_{\nu_{+}(t)}-\frac{1}{2}P_tG^TC^{-1}\right\| \in O(h).
\end{equation}
Indeed: in the case of EAKF/ETKF, the claim follows from the decompositions
\begin{equation}
\hat{K}_{\nu_{+}(t)} - \frac{1}{2}P_tG^TC^{-1} = \frac{1}{2}\left(P_{\nu_{+}(t)}-P_t\right)G^TC^{-1}
\end{equation}
and
\begin{align}
&K_{\nu_{+}(t)}-\hat{K}_{\nu_{+}(t)}-\frac{1}{2}P_tG^TC^{-1}\notag\\
&= P_{\nu_{+}(t)}^{f}G^T\left(\left(C+hGP_{\nu_{+}(t)}^{f}G^T\right)^{-1}-C^{-1}\right) + \frac{1}{2}\left(P_{\nu_{+}(t)}^{f}-P_t\right)G^TC^{-1}
\end{align}
together with Lemma \ref{boundPf} and Theorem \ref{covConvergenceESRF}. For the unperturbed filter we decompose the differences in such a way that
\begin{align}
&\hat{K}_{\nu_{+}(t)} - \frac{1}{2}P_tG^TC^{-1}\notag\\
&= \frac{1}{2}\left(P_{\nu_{+}(t)}^{f}-P_t\right)G^TC^{-1}+ P_{\nu_{+}(t)}^{f}G^T\left(\tilde{C}_{\nu_{+}(t)} - \frac{1}{2}C^{-1}\right)\notag\\
&= \frac{1}{2}\left(P_{\nu_{+}(t)}^{f}-P_t\right)G^TC^{-1}\notag\\
&\hspace{0.5cm} + \tilde{C}_{\nu_{+}(t)}\left( C^{\frac{1}{2}}\left(C^{\frac{1}{2}} - \left(C+hGP_{\nu_{+}(t)}^{f}G^T\right)^{\frac{1}{2}}\right) - hGP_{\nu_{+}(t)}^{f}G^T\right)\frac{1}{2}C^{-1}
\end{align}
and
\begin{align}
&K_{\nu_+(t)} - \hat{K}_{\nu_+(t)}\\
&= P_{\nu_+(t)}^{f} G^T\left(C+hGP_{\nu_+(t)}^{f}G^T\right)^{-\frac{1}{2}}\notag\\
&\hspace{0.5cm}\times\left( \left(C+hGP_{\nu_+(t)}^{f}G^T\right)^{-\frac{1}{2}} - \left( \left(C+hGP_{\nu_+(t)}^{f}G^T\right)^{\frac{1}{2}} + C^{\frac{1}{2}}\right)^{-1}\right)\notag\\
&= K_{\nu_+(t)}C^{\frac{1}{2}}\left( \left(C+hGP_{\nu_+(t)}^{f}G^T\right)^{\frac{1}{2}} + C^{\frac{1}{2}}\right)^{-1}
\end{align}
which gives
\begin{align}
&K_{\nu_+(t)} - \hat{K}_{\nu_+(t)} - \frac{1}{2}P_tG^TC^{-1}\notag\\
&= \left(K_{\nu_+(t)} - P_tG^TC^{-1}\right)C^{\frac{1}{2}}\left( \left(C+hGP_{\nu_+(t)}^{f}G^T\right)^{\frac{1}{2}} + C^{\frac{1}{2}}\right)^{-1}\notag\\
&\hspace{0.5cm} + P_tG^TC^{-1}\left( C^{\frac{1}{2}}\left( \left(C+hGP_{\nu_+(t)}^{f}G^T\right)^{\frac{1}{2}} + C^{\frac{1}{2}}\right)^{-1} - \frac{1}{2}{\rm Id}\right)\notag\\
&= \left(K_{\nu_+(t)} - P_tG^TC^{-1}\right)C^{\frac{1}{2}}\left( \left(C+hGP_{\nu_+(t)}^{f}G^T\right)^{\frac{1}{2}} + C^{\frac{1}{2}}\right)^{-1}\notag\\
&\hspace{0.5cm} + \frac{1}{2}P_tG^TC^{-1}\left(C^{\frac{1}{2}} - \left(C+hGP_{\nu_+(t)}^{f}G^T\right)^{\frac{1}{2}}\right)\left(\left(C+hGP_{\nu_+(t)}^{f}G^T\right)^{\frac{1}{2}} + C^{\frac{1}{2}}\right)^{-1}.
\end{align}
Using 
\begin{equation}
\left(C+hGP_k^{f}G^T\right)^{\frac{1}{2}} + C^{\frac{1}{2}} \geq 2C^{\frac{1}{2}} \geq 2\sqrt{\lambda_{\min}(C)}{\rm Id}
\end{equation}
with $\lambda_{\min}(C)$ the smallest eigenvalue of $C$, which yields
\begin{equation}
\left\|\left(C+hGP_k^{f}G^T\right)^{\frac{1}{2}} - C^{\frac{1}{2}}\right\| \leq \frac{1}{2\sqrt{\lambda_{\min}(C)}}\left\|C+hGP_k^{f}G^T - C\right\|= \frac{h}{2\sqrt{\lambda_{\min}(C)}}\left\|GP_k^{f}G^T\right\|,
\end{equation}
together with Lemma \ref{boundPf} and Theorem \ref{covConvergenceESRF} then yields the claim.\\
\noindent
Note that by (\ref{boundW}) it holds
\begin{align}\label{estDiffFor}
&\sum_{i=1}^M \left\|X_{\eta_+(t)}^{(i),f} - X_t^{(i)}\right\|^2 \notag\\
&\lesssim \sum_{i=1}^M \left\|X_{\eta(t)}^{(i),a} - X_t^{(i)}\right\|^2 + h^2\|A\|^2\left(\sum_{i=1}^M \left\|X_{\eta(t)}^{(i),a}\right\|^2\right) + h^2\sqrt{M}(M-1)\kappa
\end{align}
where in expectation, the first $h^2$-term is bounded uniformly in time by Lemma \ref{boundMemAT}. Thus
\begin{align}
&\mathbb{E}\left[\sup_{t\in [0,T]} \sum_{i=1}^M\left\|X_{\eta(t)}^{(i),a} - X_{\eta(t)}^{(i)}\right\|^2\right]\notag\\
&\lesssim \mathbb{E}\left[\sum_{i=1}^M \left\|X_0^{(i),a} - X_0^{(i)}\right\|^2\right] + T\int_0^T \mathbb{E}\left[\sup_{r \in [0,s]}\sum_{i=1}^M\left\|X_{\eta(r)}^{(i),a} - X_r^{(i)}\right\|^2\right]{\rm d}s +O(h^2).
\end{align}
Together with \eqref{contLimXCont} this then proves the claim of Theorem \ref{mainResult} by a Gronwall argument.
\newpage
\subsection{Extension to general Lipschitz-continuous model operators.}\label{SubsectionExt}
In the nonlinear case where
\[ {\rm d}X_t = f\left(X_t\right){\rm d}t + Q^{\frac{1}{2}}{\rm d}W_t\]
yielding a forecast ensemble of the form
\begin{equation}
X_{t_k}^{(i),f} = X_{t_{k-1}}^{(i),a} + hf\left(X_{t_{k-1}}^{(i),a}\right) + hQ^{\frac{1}{2}}\hat{W}_k^{(i),h},
\end{equation}
we do not obtain a recursion of the covariance matrices in closed form and consequently no a priori continuous time limit result as in Theorem \ref{covConvergenceESRF}. Nevertheless, for Lipschitz-continuous $f$ we can show a continuous time limit of the ensemble members which then in turn yields the convergence of the covariance matrices. Observe the following:
\begin{align}\label{diffCovNonlinear}
&\left\|P_{\nu(t)}^{f} - P_t\right\|\notag\\
&= \left\|\frac{1}{M-1}\sum_{i=1}^M \left(X_{\eta(t)}^{(i),f} - \bar{x}_{\nu(t)}^{f}\right)\left(X_{\eta(t)}^{(i),f} - \bar{x}_{\nu(t)}^{f}\right)^T - \left(X_t^{(i)}-\bar{x}_t\right)\left(X_t^{(i)}-\bar{x}_t\right)^T\right\|\notag\\
&\leq 2\left(\left(\mathcal{V}_{\nu(t)}^{f}\right)^{\frac{1}{2}} + \left(\mathcal{V}_t\right)^{\frac{1}{2}}\right)\left(\frac{1}{M-1}\sum_{i=1}^M \left\|X_{\eta(t)}^{(i),f} - X_t^{(i)}\right\|^2\right)^{\frac{1}{2}}
\end{align}
which yields
\begin{equation}
\left\|P_{\nu(t)}^{f} - P_t\right\|^2 \leq 8\left(\mathcal{V}_{\nu(t)}^{f} + \mathcal{V}_t\right)\left(\frac{1}{M-1}\sum_{i=1}^M \left\|X_{\eta(t)}^{(i),f} - X_t^{(i)}\right\|^2\right)
\end{equation}
with $\mathcal{V}_k^{f}$ and $\mathcal{V}_t$ as defined in (\ref{defV}). Similar to \cite{deWiljes2018}, one can show that $\mathcal{V}_t$ is bounded uniformly on $[0,T]$. Since it holds by (\ref{boundW}) that
\begin{align}
\mathcal{V}_k^{f} &= \frac{1}{M-1}\sum_{i=1}^M \left\|X_{t_{k-1}}^{(i),a} - \bar{x}_{k-1}^{a} + h\left(f\left(X_{t_{k-1}}^{(i),a}\right) - \bar{f}_{k-1}^{a}\right) + hQ^{\frac{1}{2}}\hat{W}_k^{(i)}\right\|^2\notag\\
&\leq \left(1+h+2h(Lf)_+ + 8h^2\|f\|_{\text{Lip}}^2\right)\mathcal{V}_{k-1}^{a} + h(1+2h)\sqrt{M}(M-1)\kappa
\end{align}
and
\begin{equation}
\mathcal{V}_k^{a} = \text{tr}(P_k^{a}) = \text{tr}\left(P_k^{f} - hK_kGP_k^{f}\right) = \text{tr}(P_k^{f}) - \left\|P_k^{f}G^TC^{-\frac{1}{2}}\right\|_F^2 \leq \text{tr}(P_k^{f}) = \mathcal{V}_k^{f},
\end{equation}
we thus deduce that both $\mathcal{V}_k^{a}$ and $\mathcal{V}_k^{f}$ are bounded uniformly in $k$ by a Gronwall argument. This further yields that $\left\|P_k^{f}\right\|$ and $\left\|P_k^{a}\right\|$ are bounded uniformly in $k$.\\
Therefore assume existence of a strong solution to
\begin{equation}\label{contFilterNonL}
{\rm d}X_t^{(i)} = f\left(X_t^{(i)}\right){\rm d}t + Q^{\frac{1}{2}}\hat{W}_t^{(i)}{\rm d} t + P_tG^TC^{-1}\left({\rm d}Y_t - \frac{1}{2}G\left(X_t^{(i)}+\bar{x}_t\right){\rm d}t\right)
\end{equation}
with
\begin{equation}
\sup_{t\in [0,T]} \mathbb{E}\left[\left\|X_t^{(i)}\right\|^2\right] < \infty,
\end{equation}
then following similar steps as seen in the proof of Theorem \ref{mainResult} we easily deduce:
\begin{theorem}
Consider Algorithm 1 or Algorithm 2, let $\left(X_t^{(i)}\right)_{t \in [0,T]}$ be the unique strong solution to (\ref{contFilterNonL}) and let $\|P_0^{a}\|$ be bounded uniformly in $\omega$. If
\begin{equation}
\mathbb{E}\left[\sum_{i=1}^M\left\|X_0^{(i),a}-X_0^{(i)}\right\|^2\right] \in O(h),
\end{equation}
then it holds
\begin{equation}
 \mathbb{E}\left[\sup_{t \in [0,T]}\sum_{i=1}^M \left\|X_{\eta(t)}^{(i),a}-X_t^{(i)}\right\|^2\right] \in O(h).
\end{equation}
\end{theorem}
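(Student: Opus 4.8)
The plan is to follow the proof of Theorem~\ref{mainResult} essentially verbatim, starting from the unified update equation \eqref{UnifUpdate} — which is purely algebraic and hence unchanged in the nonlinear setting, with $\hat K_k$ and $\mathcal R_k^h$ given by \eqref{SummEATKF}--\eqref{SummWH} — and simply replacing the linear drift $A$ by $f$. Writing the difference process $X_{\eta(t)}^{(i),a}-X_t^{(i)}$ as an integral exactly as in Section~\ref{secContLim}, the drift term $A\bigl(X_{\eta(s)}^{(i),a}-X_s^{(i)}\bigr)$ becomes $f\bigl(X_{\eta(s)}^{(i),a}\bigr)-f\bigl(X_s^{(i)}\bigr)$, and Lipschitz continuity of $f$ gives $\sum_i\|f(X_{\eta(s)}^{(i),a})-f(X_s^{(i)})\|^2\le\|f\|_{\mathrm{Lip}}^2\sum_i\|X_{\eta(s)}^{(i),a}-X_s^{(i)}\|^2$, so this contribution enters the Gr\"onwall quantity. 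The model–perturbation term is absorbed via Assumption~3; the stochastic integral $\int_0^{\eta(t)}(K_{\nu_+(s)}-P_sG^TC^{-1})C^{\frac12}{\rm d}V_s$ and the rest term $\mathcal R_{\nu(t)}^h$ are treated by the $L^p$-maximal inequality and by Lemma~\ref{EstSumm} exactly as before; and the forecast/analysis discrepancy is controlled by the nonlinear analogue of \eqref{estDiffFor},
\[
\sum_{i=1}^M\bigl\|X_{\eta_+(t)}^{(i),f}-X_t^{(i)}\bigr\|^2\lesssim\sum_{i=1}^M\bigl\|X_{\eta(t)}^{(i),a}-X_t^{(i)}\bigr\|^2+h^2\|f\|_{\mathrm{Lip}}^2\sum_{i=1}^M\bigl\|X_{\eta(t)}^{(i),a}\bigr\|^2+h^2\sqrt{M}(M-1)\kappa,
\]
which follows from \eqref{boundW} and Lipschitz continuity of $f$. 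One also needs the nonlinear analogues of \eqref{contLimXCont} and of Lemma~\ref{boundMemAT}, both obtained exactly as in the linear case (the latter using \eqref{boundW} and the linear growth of $f$).

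The one genuinely new point is that Theorem~\ref{covConvergenceESRF} is not available, so the Kalman-gain differences $\|K_{\nu_+(s)}-P_sG^TC^{-1}\|$, $\|\hat K_{\nu_+(s)}-\tfrac12 P_sG^TC^{-1}\|$ and $\|K_{\nu_+(s)}-\hat K_{\nu_+(s)}-\tfrac12 P_sG^TC^{-1}\|$ can no longer be declared $O(h)$ a priori. Instead I would invoke the algebraic decompositions of these three differences already recorded in Section~\ref{secContLim} — none of which uses linearity of the drift — to bound each of them by a uniformly bounded multiple of $\|P_{\nu_+(s)}^f-P_s\|$ plus a term that is genuinely $O(h)$ by the uniform bounds on $\|P_k^f\|$ and Lemma~\ref{EstSumm}. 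Then \eqref{diffCovNonlinear}, combined with the uniform-in-$(k,\omega)$ bounds on $\mathcal V_t,\mathcal V_k^f,\mathcal V_k^a$ — hence on $\|P_t\|,\|P_k^f\|,\|P_k^a\|$ — established in the passage preceding the theorem (this is where the hypothesis that $\|P_0^a\|$ is bounded uniformly in $\omega$ is used), yields
\[
\bigl\|P_{\nu(t)}^f-P_t\bigr\|^2\ \lesssim\ \frac{1}{M-1}\sum_{i=1}^M\bigl\|X_{\eta(t)}^{(i),f}-X_t^{(i)}\bigr\|^2\ \lesssim\ \sum_{i=1}^M\bigl\|X_{\eta(t)}^{(i),a}-X_t^{(i)}\bigr\|^2+O(h^2),
\]
so every gain-difference contribution feeds back into the Gr\"onwall quantity $Z(t):=\mathbb E\bigl[\sup_{r\le t}\sum_i\|X_{\eta(r)}^{(i),a}-X_r^{(i)}\|^2\bigr]$ rather than acting as a fixed $O(h)$ forcing.

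The step I expect to be the main obstacle is the bookkeeping required to conclude from the above that $Z$ satisfies an inequality of the form $Z(t)\lesssim Z(0)+\int_0^t Z(s)\,{\rm d}s+O(h)$ — and not one with a worse rate. In the Cauchy--Schwarz expansion the gain differences appear multiplied by $(M-1)\mathcal V_s=\sum_i\|X_s^{(i)}-\bar x_s\|^2$ (bounded uniformly in $\omega$), by $\sum_i\|X_s^{(i)}\|^2$ (integrable by \eqref{boundContMem}) and by $\|X_s^{\mathrm{ref}}\|^2$ (integrable by \eqref{RefTraj}); for the first factor one may substitute $\|P_{\nu_+(s)}^f-P_s\|^2\lesssim\sum_i\|X^{(i),f}_{\eta_+(s)}-X^{(i)}_s\|^2$ directly and close, whereas for the latter two one must exploit that $\|P_{\nu_+(s)}^f-P_s\|$ is itself bounded uniformly in $\omega$ so that one factor of it is absorbed into the deterministic constant, and then combine $\mathbb E[\|P_{\nu_+(s)}^f-P_s\|^2]\lesssim Z(s)+O(h^2)$ with the remaining integrable factor — this is the delicate point, and one should check here whether \eqref{RefTraj} and \eqref{boundContMem} suffice or whether a slightly stronger (e.g. almost-sure, or higher-moment) bound on $X^{\mathrm{ref}}$ and on $X^{(i)}$ is needed to preserve the rate $O(h)$. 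Once this is arranged, Gr\"onwall's lemma together with the hypothesis $\mathbb E\bigl[\sum_i\|X_0^{(i),a}-X_0^{(i)}\|^2\bigr]\in O(h)$ closes the argument exactly as at the end of Section~\ref{secContLim}.
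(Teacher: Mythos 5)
Your proposal is correct and follows essentially the same route as the paper: the paper's own proof of this theorem consists precisely of the uniform bounds on $\mathcal{V}_t,\mathcal{V}_k^{f},\mathcal{V}_k^{a}$ (hence on $\|P_t\|,\|P_k^{f}\|,\|P_k^{a}\|$) obtained from Lipschitz continuity of $f$ and \eqref{boundW}, together with the bound \eqref{diffCovNonlinear} relating $\|P_{\nu(t)}^{f}-P_t\|$ to the ensemble errors, followed by an appeal to the proof of Theorem \ref{mainResult} with the gain differences fed back into the Gr\"onwall quantity --- exactly your plan. The delicate point you flag (that products such as $\mathbb{E}\bigl[\|P_{\nu_+(s)}^{f}-P_s\|^2\,\|X_s^{\mathrm{ref}}\|^2\bigr]$ and the analogue with $\sum_i\|X_s^{(i)}\|^2$ cannot be factored using \eqref{RefTraj} and \eqref{boundContMem} alone, so that almost-sure or higher-moment bounds may be required to keep the rate) is genuine, but the paper leaves it entirely implicit in the phrase ``following similar steps \dots we easily deduce,'' so your account is no less complete than the paper's own.
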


\subsection{Extension to orthogonal transformations.}\label{nonUnique}
\noindent
As pointed out in Section \ref{RemNonUnique}, also post-multiplying with an orthogonal matrix $\mathcal{U}_k$ is a valid transformation where, additionally, ${\bf 1} = (1,...,1)^T$ is an eigenvector of $\mathcal{U}_k$ such that the transformation remains mean-preserving. An example is to be found in \cite{wang2004} where the authors propose a revised version of the ETKF in terms of singular value decompositions: originally in \cite{bishop2001}, the transformation reads
\begin{equation}
T_k = U(I+h\Sigma)^{-\frac{1}{2}}
\end{equation}
as a result of the singular value decomposition
\begin{equation}
\left(E_k^{f}\right)^TG^TC^{-1}GE_k^{f} = U\Sigma V^T.
\end{equation}
This formulation was then modified in \cite{wang2004} via
\begin{equation}
T_k = U(I+h\Sigma)^{-\frac{1}{2}}U^T
\end{equation}
which, apart from other properties as discussed in \cite{wang2004}, gives again a mean-preserving ETKF.\\
\noindent
A natural generalization of the above is to use the transformation
\begin{equation}\label{generalU}
\tilde{T}_k := T_k \mathcal{U}_k, \hspace{1cm} \mathcal{U}_k = \mathcal{U}\left(\mathbb{X}_k^{f}\right),
\end{equation}
where $\mathcal{U}$ is a function of the underlying ensemble $\mathbb{X}_k^{f} = \left(X_{t_k}^{(i),f}\right)_{i=1,...,M}$ taking values in the set of orthogonal matrices that are mean-preserving. As we have already argued in Section \ref{RemNonUnique}, (\ref{generalU}) yields the same covariance matrix as the original transformed ensemble thus Lemma \ref{covConvergenceESRF} carries over immediately.\\
\noindent
Let $E^{a}$ now denote the ensemble resulting from the algorithm using (\ref{generalU}). Writing out the previously analyzed algorithms in this setting gives the following evolution equation
\begin{align}\label{perturbedEvol}
E_k^{a} &= E_{k-1}^{a}\mathcal{U}_k + h\left( \left(A - \frac{1}{2}P_k^{f}G^TC^{-1}G\right)E_{k-1}^{a} + \mathcal{W}_k\right)\mathcal{U}_k + O\left(h^2\right)\\
&= E_{k-1}^{a} + E_{k-1}^{a}\left(\mathcal{U}_k-{\rm Id}\right) + h\left( \left(A - \frac{1}{2}P_k^{f}G^TC^{-1}G\right)E_{k-1}^{a} + \mathcal{W}_k\right)\mathcal{U}_k + O\left(h^2\right).
\end{align}
For the abstract ansatz that, when applied to $E_{k-1}^{a}$, the matrices $\mathcal{U}_k$ evolve according to
\begin{equation}
\mathcal{U}_k = {\rm Id} + h\mathcal{R}_k^{h} + O\left(h^2\right)
\end{equation}
such that $\frac{1}{h}\left(\mathcal{U}_k - {\rm Id}\right)$ converges to some continuous-time process $\mathcal{R}$ in a sense to be specified, a similar analysis of Equation (\ref{perturbedEvol}) on existence of a continuous time limit as in Section \ref{SectionContLimit} should apply. This analysis, however, is beyond the scope of this paper.

\section{Discussion.}\label{SectionDiscuss}\\
\noindent
We want to highlight here the main aspects of Section \ref{SectionContLimit} and especially Theorem \ref{mainResult}: the statement (\ref{senseOfConv}) fully characterizes the continuous time limit of the analyzed filtering algorithms by specifying the sense of convergence as well as the rate of convergence. Interestingly, Theorem \ref{mainResult} gives the same limit result for EAKF, ETKF, and Whitaker, Hamill (2002) all together. This suggests that (\ref{contFilterAT}) forms a universal limiting ensemble in the sense specified by (\ref{senseOfConv}) of the class of ESRF algorithms with deterministic model perturbations and in general fully deterministic EnKF. Indeed, consider for instance the deterministic EnKF in \cite{sakov2008} in which the proposed tranformation of the ensemble deviations $E_k^{f}$ yields (\ref{KFcovA}) with an additional $h^2$-term. Following the analysis in Section \ref{SectionContLimit} together with using deterministic model perturbations satisfying Assumption 1, one can easily show convergence of the ensemble coming from this filter towards solutions of the Ensemble Kalman-Bucy Filter (\ref{contFilterAT}) in the sense of Theorem \ref{mainResult}.\\
\noindent
These results come in handy in the property analysis: in \cite{deWiljes2018}, the authors demonstrated in the fully-observed case (i.e. $G = {\rm Id}$) that (\ref{contFilterAT}) together with deterministic model perturbations of the form (\ref{ReichWCont}), is stable and accurate. Their results easily extend to the case of general deterministic model perturbations fulfilling Assumption 2. Therefore by Theorem \ref{mainResult}, these properties now carry over to the discrete-time counterparts as they are independent of $h$ by construction. This yields the powerful conclusion that by analyzing one continuous-time equation we immediately analyze a whole class of discrete-time algorithms.

\section{Conclusion and Outlook.}\label{SectionConclusion}\\
\noindent
In this paper, we showed the existence of a continuous time limit of a broad class of Ensemble Square Root filtering algorithms with deterministic model perturbations. In the linear setting, we derived general conditions on these perturbation which enabled us to show convergence of the empirical mean and covariance matrix towards their respective counterparts in the Kalman-Bucy Filter in the sense that locally uniformly in time the distance to their continuous-time counterpart decays to zero at rate $h$. Under further assumptions, we showed for three exemplary algorithms the existence of an ensemble solving the Ensemble Kalman-Bucy filtering equations (\ref{contFilterAT}) such that the ensemble-mean-square error between the discrete-time and continuous-time ensemble converges to zero locally uniformly in time in expectation at rate $h$. As shown, this result further holds in the case of nonlinear, Lipschitz-continuous model operators.\\
\noindent
An important general observation coming from this analysis is the universality of the limiting ensemble, i.e. we obtain the same limit for all ESRF and furthermore for all fully deterministic EnKF algorithms.\\
\noindent
Along the above analysis, we identified and discussed suitable assumptions on the deterministic model perturbations to yield the above convergence results. However, these assumptions were motivated by the aim of approximating the Riccati equation (\ref{Ricc}) satisfied by the covariance matrices of the Ensemble Kalman-Bucy Filter which implicitly requires an invertibility condition on the ensemble covariance matrices. We shortly discussed possible generalizations involving projected versions of the Riccati equation, or stochastic perturbations instead.\\
\noindent
The latter will form the next step in conducting continuous time limit analyses for these algorithms. In the setting of \cite{langeStannat2019} where the model operator $f$ and observation operator $g$ were assumed to be Lipschitz-continuous and bounded, we were able to show the existence of a continuous time limit using a forecast step of the form
\begin{equation}\label{forecastStep}
X_{t_k}^{(i),f} = X_{t_{k-1}}^{(i),a} + hf\left(X_{t_{k-1}}^{(i),a}\right) + Q^{\frac{1}{2}}\left(W_{t_k}^{(i)} - W_{t_{k-1}}^{(i)}\right)
\end{equation}
as prescribed by the Euler-Maruyama time-discretizations where $W^{(i)}$ are independent standard Brownian motions. Convergence then holds in the sense that
\begin{equation}
\sup_{t \in [0,T]} \mathbb{E}\left[\sum_{i=1}^M \left\|X_{\eta(t)}^{(i),a} - X_t^{(i)}\right\|^2\right] \in O(h)
\end{equation}
where
\begin{equation}
\begin{aligned}
{\rm d}X_t^{(i)} &= f\left(X_t^{(i)}\right){\rm d}t + Q^{\frac{1}{2}}{\rm d}W_t^{(i)} \\
&\hspace{0.5cm}+\frac{1}{M-1}E_t\mathcal{G}_t^TC^{-1}\left({\rm d}Y_t - \frac{1}{2}\left(g\left(X_t^{(i)}\right) + \bar{g}_t\right){\rm d}t\right)
\end{aligned}
\end{equation}
for the appropriate choice of initial conditions (for notation see \cite{langeStannat2019}). The proof, though, highly relies on the boundedness assumption on $f$ and $g$. Thus extending the analysis to the above setting with $f$ Lipschitz-continuous and $g$ linear is still work in progress.\\
\noindent
In \cite{langeStannat2019} and in this paper, we used the Euler-Maruyama time-discretization to formulate the filtering algorithms. An interesting extension would be to consider different discretization schemes, e.g. implicit Euler or higher-order Taylor expansions, and to analyze resulting limiting equations and further implications on structure or properties of these algorithms. Again, these are considerations for future research.

\bigskip 
\noindent 
{\bf Acknowledgements.} The research of Theresa Lange and Wilhelm Stannat has been partially funded by Deutsche Forschungsgemeinschaft (DFG) - SFB1294/1 - 318763901.

\appendix
\section{Proof of the Lemmas}
\subsection{Proof of Lemma \ref{boundPf}.}\label{Pfbound}
\noindent
First of all, note that it holds for any $k = 1,..., L$ in the sense of symmetric positive semidefinite matrices
\begin{align}\label{PaSmallerPf}
P_k^{a} &= \left({\rm Id} - hK_kG\right)P_k^{f} = P_k^{f} - h P_k^{f}G^T\left(hGP_k^{f}G^T+C\right)^{-1}GP_k^{f}\notag\\
&\leq P_k^{f}.
\end{align}
This yields by Assumption 1
\begin{align}
P_k^{f} &= \left({\rm Id} + hA\right)P_{k-1}^{a}\left({\rm Id} + hA\right)^T + h\left({\rm Id} + hA\right)Q + hQ\left({\rm Id} + hA\right)^T\notag\\
&\hspace{0.5cm} + \frac{h^2}{M-1}\sum_{i=1}^M Q^{\frac{1}{2}}\left(\hat{W}_k^{(i),h}-\hat{w}_k^{h}\right)\left(\hat{W}_k^{(i),h}-\hat{w}_k^{h}\right)^TQ^{\frac{T}{2}}\notag\\
&\leq \left({\rm Id} + hA\right)P_{k-1}^{f}\left({\rm Id} + hA\right)^T + h\left({\rm Id} + hA\right)Q + hQ\left({\rm Id} + hA\right)^T\notag\\
&\hspace{0.5cm} + \frac{h^2}{M-1}\sum_{i=1}^M Q^{\frac{1}{2}}\left(\hat{W}_k^{(i),h}-\hat{w}_k^{h}\right)\left(\hat{W}_k^{(i),h}-\hat{w}_k^{h}\right)^TQ^{\frac{T}{2}}
\end{align}
\noindent
and
\begin{equation}
\begin{aligned}
\left\|P_k^{f}\right\| 
&\leq \left(1 + 2h\|A\| + h^2\|A\|\right)\left\|P_{k-1}^{f}\right\| + 2h\left(1 + h\|A\|\right)\|Q\| + h^2\left\|Q^{\frac{1}{2}}\right\|^2\kappa\\
\end{aligned}
\end{equation}
which with a Gronwall argument yields the claim.

\subsection{Proof of Lemma \ref{BoundMean}.}\label{appProofBoundMean}
\noindent
Since the updated mean satisfies the recursion
\begin{equation}
\bar{x}_{k+1}^{a} = \bar{x}_k^{a} + hA\bar{x}_k^{a} + K_{k+1}\left(\Delta Y_{k+1} - hG\bar{x}_k^{a} - h^2GA\bar{x}_k^{a}\right),
\end{equation}
we can estimate
\begin{align}
\left\|\bar{x}_{k+1}^{a}\right\|^2 &\leq \left\| {\rm Id} + hA - hK_{k+1}G - h^2K_{k+1}GA\right\|^2\left\|\bar{x}_k^{a}\right\|^2 + \left\|K_{k+1}\right\|^2\left\|\Delta Y_{k+1}\right\|^2\notag\\
&\hspace{0.5cm} + 2 \left \langle \left({\rm Id} + hA - hK_{k+1}G - h^2K_{k+1}GA\right)\bar{x}_k^{a}, K_{k+1}\Delta Y_{k+1}\right \rangle.
\end{align}
Observe that by (\ref{formY}) it holds
\begin{align}\label{EmeanObs}
\mathbb{E}\left[\left \langle \bar{x}_k^{a}, K_{k+1}\Delta Y_{k+1}\right \rangle\right] &= \int_{t_k}^{t_{k+1}} \mathbb{E}\left[\left \langle \bar{x}_k^{a}, K_{k+1}GX_s^{\text{ref}}\right \rangle\right] {\rm d}s\notag\\
 &\leq h\mathbb{E}\left[\left\|\bar{x}_k^{a}\right\|^2\right] + h\int_{t_k}^{t_{k+1}}\mathbb{E}\left[\|K_{k+1}\|^2\|G\|^2\left\|X_s^{\text{ref}}\right\|^2\right]{\rm d}s
\end{align}
where due to (\ref{RefTraj}) the last summand is in $O(h^2)$. Thus by boundedness of $\|K_k\|$ uniformly in $k$, we obtain an estimate of the form
\begin{equation}
\mathbb{E}\left[\left\|\bar{x}_{k+1}^{a}\right\|^2\right] \leq \left(1+ hC^{(1)}(h)\right)\mathbb{E}\left[\left\|\bar{x}_k^{a}\right\|^2\right] + hC^{(2)}(h)
\end{equation}
which by a Gronwall argument yields the claim.

\subsection{Proof of Lemma \ref{boundMemAT}.}\label{appProofBoundMem}
\noindent
The recursion
\begin{align}
X_{t_k}^{(i),f} &= X_{t_{k-1}}^{(i),a} + hAX_{t_{k-1}}^{(i),a} + hQ^{\frac{1}{2}}\hat{W}_k^{(i),h},\\
X_{t_k}^{(i),a} &= X_{t_k}^{(i),f} - h\hat{K}_kGX_{t_k}^{(i),f} - h\left(K_k - \hat{K}_k\right)G\bar{x}_k^{f} + K_k \Delta Y_k + \mathcal{R}_k^{h}e_i
\end{align}
together with Lemma \ref{EstSumm} yields an estimate of the following form
\begin{align}
\sum_{i=1}^M \left\|X_{t_{k+1}}^{(i),a}\right\|^2 &\leq (1+h\mathcal{C}_1(h))\sum_{i=1}^M\left\|X_{t_k}^{(i),a}\right\|^2 + h\mathcal{C}_2(h) \notag\\
&\hspace{0.5cm}+ 2M\left\langle \bar{x}_k^{a}, K_{k+1}\Delta Y_{k+1}\right \rangle + MC_3\left\|K_{k+1}\Delta Y_{k+1}\right\|^2.
\end{align}
Using (\ref{RefTraj}) we obtain
\begin{align}
\mathbb{E}\left[\left\|K_{k+1}\Delta Y_{k+1}\right\|^2\right] &\leq 2\mathbb{E}\left[\left\|K_{k+1}\right\|^2\left( \left\|\int_{t_k}^{t_{k+1}}GX_s^{\text{ref}}{\rm d}s\right\|^2 + \left\|C^{\frac{1}{2}}\right\|^2\left\|V_{t_{k+1}} - V_{t_k}\right\|^2\right)\right] \notag\\
&\leq \bar{C}_1h^2 + \bar{C}_2h
\end{align}
for some constants $\bar{C}_1, \bar{C}_2$ independent of $h$ and uniform in $k$. Thus in total this yields with (\ref{EmeanObs})
\begin{equation}
\mathbb{E}\left[\sum_{i=1}^M \left\|X_{t_{k+1}}^{(i),a}\right\|^2\right] \leq \left(1+h\tilde{\mathcal{C}}_1(h)\right)\mathbb{E}\left[\sum_{i=1}^M\left\|X_{t_k}^{(i),a}\right\|^2\right] + h\tilde{\mathcal{C}}_2(h)
\end{equation}
which by a Gronwall argument yields the claim.

\section{Bounds for the modified filter}\label{appProofReichCov}\\
\noindent
The modified filter analyzed in \cite{langeStannat2019} using deterministic model perturbations of the form (\ref{ReichW}) reads as follows:\\
{\bf Algorithm 3.}
\begin{align}
X_{t_k}^{(i),f} &= X_{t_{k-1}}^{(i),a} + hAX_{t_{k-1}}^{(i),a} + \frac{h}{2}Q\left(P_{k-1}^{a}\right)^{-1}\left(X_{t_{k-1}}^{(i),a} - \bar{x}_{k-1}^{a}\right),\\
X_{t_k}^{(i),a} &= X_{t_k}^{(i),f} + K_k\left(\Delta Y_k - \frac{h}{2}G\left(X_{t_k}^{(i),f} + \bar{x}_k^{f}\right)\right).
\end{align}
\noindent
The same analysis as conducted in the main part of this paper also applies for this algorithm in the case of the above particular choice of perturbations due to the following results:
\begin{lemma}\label{covProp}
It holds for each $k = 1, ..., L$:
\begin{itemize}
\item in the sense of symmetric positive semidefinite matrices
\begin{equation}\label{LemmaFirst}
P_k^{a} \leq P_k^{f}
\end{equation}
\item for $h$ small enough there exists a constant $0 < p_T^{*,a} < \infty$ such that
\begin{equation}\label{LemmaThird}
\left\|\left(P_k^{a}\right)^{-1}\right\| \leq p_T^{*,a}
\end{equation}
\item for $h$ small enough there exists a constant $0 < p_T^{*,f} < \infty$ such that
\begin{equation}\label{LemmaForth}
\left\|P_k^{f}\right\| \leq p_T^{*,f}.
\end{equation}
\end{itemize}
\end{lemma}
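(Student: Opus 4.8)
The plan is to establish the three assertions in the order stated, using that for Algorithm~3 the forecast recursion (\ref{recurP}) specialises, by the computation in Example~\ref{exampleDiscrW}, to $P_k^{f}=M_kP_{k-1}^{a}M_k^{T}$ with $M_k:={\rm Id}+hA+\frac{h}{2}Q\bigl(P_{k-1}^{a}\bigr)^{-1}$; equivalently $P_k^{f}=\left({\rm Id}+hA\right)P_{k-1}^{a}\left({\rm Id}+hA\right)^{T}+hQ+\frac{h^2}{2}\left(AQ+QA^{T}\right)+\frac{h^2}{4}Q\bigl(P_{k-1}^{a}\bigr)^{-1}Q$, together with the update $P_k^{a}=\left({\rm Id}-hK_kG\right)P_k^{f}$. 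Assertion (\ref{LemmaFirst}) is then immediate: exactly as in (\ref{PaSmallerPf}), $P_k^{a}=P_k^{f}-hP_k^{f}G^{T}\bigl(hGP_k^{f}G^{T}+C\bigr)^{-1}GP_k^{f}\leq P_k^{f}$ since the subtracted term is symmetric positive semidefinite.

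For (\ref{LemmaThird}) I would first record the information-form identity $\bigl(P_k^{a}\bigr)^{-1}=\bigl(P_k^{f}\bigr)^{-1}+hG^{T}C^{-1}G$, which follows from the Woodbury identity (or is checked by expanding $\bigl[\bigl(P_k^{f}\bigr)^{-1}+hG^{T}C^{-1}G\bigr]P_k^{a}$ and simplifying via $C^{-1}-hC^{-1}GP_k^{f}G^{T}\bigl(hGP_k^{f}G^{T}+C\bigr)^{-1}=\bigl(hGP_k^{f}G^{T}+C\bigr)^{-1}$). This reformulation is the crucial point: the analysis step becomes an additive perturbation of the inverse covariance that can only increase $\bigl(P_k^{a}\bigr)^{-1}$, so a lower bound on $P^{a}$ needs no prior upper bound. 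I then argue by induction on $k$: fix a constant $c_a>0$ and suppose $P_{k-1}^{a}\geq c_a\,{\rm Id}$. Discarding the positive semidefinite inflation term $\frac{h^2}{4}Q\bigl(P_{k-1}^{a}\bigr)^{-1}Q$ and using $\left({\rm Id}+hA\right)P_{k-1}^{a}\left({\rm Id}+hA\right)^{T}\geq c_a\left(1-h\|A\|\right)^{2}{\rm Id}$ and $hQ+\frac{h^2}{2}\left(AQ+QA^{T}\right)\geq\bigl(h\lambda_{\min}(Q)-h^{2}\|A\|\,\|Q\|\bigr){\rm Id}$, one obtains $P_k^{f}\geq\beta\,{\rm Id}$ with $\beta:=c_a\left(1-h\|A\|\right)^{2}+h\lambda_{\min}(Q)-h^{2}\|A\|\,\|Q\|$, hence $\bigl(P_k^{a}\bigr)^{-1}\leq\bigl(\beta^{-1}+h\|G\|^{2}\|C^{-1}\|\bigr){\rm Id}$. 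A short computation shows $\beta^{-1}+h\|G\|^{2}\|C^{-1}\|\leq c_a^{-1}$ provided $c_a$ is chosen small enough that $\lambda_{\min}(Q)-2c_a\|A\|>0$ and $\bigl(\lambda_{\min}(Q)-2c_a\|A\|\bigr)c_a^{-2}>\|G\|^{2}\|C^{-1}\|$ (both hold for $c_a$ sufficiently small, since the left side of the latter tends to $+\infty$ as $c_a\downarrow 0$) and $h$ is small; as $\beta$ does not depend on $k$, the $h$-threshold is uniform in $k$. Taking $c_a$ also no larger than $\lambda_{\min}(P_0^{a})>0$ (which is positive, else Algorithm~3 is not defined) secures the base case, and $p_T^{*,a}:=c_a^{-1}$ does the job.

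Given (\ref{LemmaFirst}) and (\ref{LemmaThird}), assertion (\ref{LemmaForth}) is cheap: from $\|(P_{k-1}^{a})^{-1}\|\leq p_T^{*,a}$ one has $\|M_k\|\leq 1+h\bigl(\|A\|+\frac{1}{2}\|Q\|\,p_T^{*,a}\bigr)=:1+hC_1$, so by (\ref{LemmaFirst}) $\left\|P_k^{f}\right\|=\left\|M_kP_{k-1}^{a}M_k^{T}\right\|\leq\left(1+hC_1\right)^{2}\left\|P_{k-1}^{a}\right\|\leq\left(1+hC_1\right)^{2}\left\|P_{k-1}^{f}\right\|$, and iterating over $k\leq L=T/h$ gives $\left\|P_k^{f}\right\|\leq e^{2C_1T}\left\|P_0^{a}\right\|=:p_T^{*,f}$; with (\ref{LemmaFirst}) this also bounds $\left\|P_k^{a}\right\|$. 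The main obstacle is the apparent circularity -- the upper bound on $\left\|P_k^{f}\right\|$ needs control of $\bigl(P_{k-1}^{a}\bigr)^{-1}$ because of the inflation term, while a naive lower bound on $P_k^{a}$ would seem to require an upper bound on $\left\|P_k^{f}\right\|$ -- and it is broken exactly by the two remarks above: the inflation term only helps the lower bound (so it may be discarded there), and the information-form identity makes the lower bound on $P^{a}$ self-contained, after which the upper bound is routine.
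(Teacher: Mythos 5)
Your forecast recursion $P_k^{f}=M_kP_{k-1}^{a}M_k^{T}$ with $M_k={\rm Id}+hA+\frac{h}{2}Q\left(P_{k-1}^{a}\right)^{-1}$ is correct, but you mis-specify the analysis step of Algorithm 3, and this is a genuine gap in your treatment of \eqref{LemmaFirst}. In Algorithm 3 the update is $X_{t_k}^{(i),a}=X_{t_k}^{(i),f}+K_k\left(\Delta Y_k-\frac{h}{2}G\left(X_{t_k}^{(i),f}+\bar{x}_k^{f}\right)\right)$, so the deviations are multiplied by ${\rm Id}-\frac{h}{2}K_kG$ and
\[
P_k^{a}=\left({\rm Id}-\tfrac{h}{2}K_kG\right)P_k^{f}\left({\rm Id}-\tfrac{h}{2}K_kG\right)^{T}=\left({\rm Id}-hK_kG\right)P_k^{f}+\tfrac{h^{2}}{4}K_kGP_k^{f}G^{T}K_k^{T},
\]
not $\left({\rm Id}-hK_kG\right)P_k^{f}$. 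Hence \eqref{LemmaFirst} is not ``immediate as in \eqref{PaSmallerPf}'': the extra positive semidefinite $h^{2}$-term must be absorbed, which the paper does by rewriting $hGP_k^{f}G^{T}=\left(C+hGP_k^{f}G^{T}\right)-C$ inside that term, so that only negative semidefinite corrections to $P_k^{f}$ remain. For the same reason your ``information-form identity'' $\left(P_k^{a}\right)^{-1}=\left(P_k^{f}\right)^{-1}+hG^{T}C^{-1}G$ holds for this algorithm only as the inequality $\left(P_k^{a}\right)^{-1}\leq\left(P_k^{f}\right)^{-1}+hG^{T}C^{-1}G$ (since $P_k^{a}\geq\left({\rm Id}-hK_kG\right)P_k^{f}$ and Woodbury applies as in \eqref{Pinv}); fortunately that is exactly the direction you use, so your arguments for \eqref{LemmaThird} and \eqref{LemmaForth} survive this correction unchanged.

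With that repair, your route is sound and genuinely different from the paper's at \eqref{LemmaThird}. The paper uses the $hQ$-contribution only to guarantee $P_k^{f}\geq\left({\rm Id}+hA\right)P_{k-1}^{a}\left({\rm Id}+hA\right)^{T}$ for $h<h^{*}$ (positivity of $Q-h^{2}AQA^{T}$, see \eqref{Pkf}) and then iterates $\left(P_k^{a}\right)^{-1}\leq\left(1-h\|A\|\right)^{-2}\left(P_{k-1}^{a}\right)^{-1}+hG^{T}C^{-1}G$, which yields a constant of the form $\alpha_T\left(P_0^{a}\right)^{-1}+T\alpha_TG^{T}C^{-1}G$ growing with $T$. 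You instead retain the $h\lambda_{\min}(Q)$ gain and run an induction preserving a fixed lower bound $P_k^{a}\geq c_a{\rm Id}$ with $c_a$ chosen small (and $c_a\leq\lambda_{\min}(P_0^{a})$); your bookkeeping of $\beta$ and of the $k$-uniform $h$-threshold checks out, and it buys a $T$-uniform bound $p_T^{*,a}=c_a^{-1}$, i.e.\ a slightly stronger conclusion than the paper's exponential-in-$T$ constant, at the price of the smallness conditions on $c_a$. Your proof of \eqref{LemmaForth} via $\|M_k\|\leq1+h\left(\|A\|+\frac{1}{2}\|Q\|p_T^{*,a}\right)$ together with $\left\|P_{k-1}^{a}\right\|\leq\left\|P_{k-1}^{f}\right\|$ is essentially the paper's Gronwall argument in multiplicative form, and your overall ordering of the three assertions, as well as the key observation that the inflation term may be discarded for the lower bound, coincide with the paper's strategy.
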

\begin{proof}
On (\ref{LemmaFirst}): using (\ref{gain}) for $K_k$ we obtain
\begin{align}
P_k^{a} &= \left({\rm Id}-\frac{h}{2}K_kG\right)P_k^{f}\left({\rm Id}-\frac{h}{2}K_kG\right)^T\notag\\
&= P_k^{f} - hP_k^{f}G^T\left(C+hGP_k^{f}G^T\right)^{-1}GP_k^{f} \notag\\
&\hspace{0.5cm}+ \frac{h^2}{4}P_k^{f}G^T\left(C+hGP_k^{f}G^T\right)^{-1}GP_k^{f}G^T\left(C+hGP_k^{f}G^T\right)^{-1}GP_k^{f}\notag\\
&= P_k^{f} - \frac{3}{4}hP_k^{f}G^T\left(C+hGP_k^{f}G^T\right)^{-1}GP_k^{f} \notag\\
&\hspace{0.5cm} - \frac{h}{4}P_k^{f}G^T\left(C+hGP_k^{f}G^T\right)^{-1}C^{-1}\left(C+hGP_k^{f}G^T\right)^{-1}GP_k^{f}\notag\\
&\leq P_k^{f}.
\end{align}
On (\ref{LemmaThird}): observe that by the Woodbury matrix identity it holds
\begin{align}\label{Pinv}
P_k^{a} &= P_k^{f} - \frac{h}{2}K_kGP_k^{f} - \frac{h}{2}P_k^{f}G^TK_k^T + \frac{h^2}{4}K_kGP_k^{f}G^TK_k^T\notag\\
&\geq P_k^{f} - hK_kGP_k^{f}\notag\\
&= P_k^{f} - h P_k^{f}G^T\left(C+hGP_k^{f}G^T\right)^{-1}GP_k^{f}\notag\\
&= \left(\left(P_k^{f}\right)^{-1} + hG^TC^{-1}G\right)^{-1}\\
\Rightarrow \left(P_k^{a}\right)^{-1} &\leq \left(P_k^{f}\right)^{-1} + hG^TC^{-1}G.
\end{align}
Further one can estimate
\begin{align}\label{Pkf}
P_k^{f} &\geq \left({\rm Id}+hA\right)P_{k-1}^{a}\left({\rm Id}+hA\right)^T + hQ + \frac{h^2}{2}AQ + \frac{h^2}{2}QA^T\notag\\
&= \left({\rm Id}+hA\right)\left(P_{k-1}^{a}+\frac{h}{2}Q\right)\left({\rm Id}+hA\right)^T + \frac{h}{2}\left(Q-h^2AQA^T\right).
\end{align}
If
\begin{equation*}
h^2 < \frac{\lambda_{-}(Q)}{\lambda_{+}(Q)\|A\|^2},
\end{equation*}
where $\lambda_{-}$ and $\lambda_{+}$ denote smallest and largest eigenvalue, respectively, then $Q-h^2AQA^T$ is positive semidefinite and by choice of $Q$ it holds
\begin{equation}
\begin{aligned}
P_k^{f} 
&\geq \left({\rm Id}+hA\right)P_{k-1}^{a}\left({\rm Id}+hA\right)^T.
\end{aligned}
\end{equation}
If therefore 
\begin{equation*}
h < \min\left(\frac{1}{\|A\|}, \sqrt{\frac{\lambda_{-}(Q)}{\lambda_{+}(Q)\|A\|^2}}\right) = \sqrt{\frac{\lambda_{-}(Q)}{\lambda_{+}(Q)}} \frac{1}{\|A\|} =: h^*,
\end{equation*}
then
\begin{align}
\left(P_k^{a}\right)^{-1} &\leq \left(P_k^{f}\right)^{-1} + hG^TC^{-1}G\notag\\
&\leq \left({\rm Id}+hA\right)^{-T}\left(P_{k-1}^{a}\right)^{-1}\left({\rm Id}+hA\right)^{-1} + hG^TC^{-1}G\notag\\
&\leq \frac{1}{(1-h\|A\|)^2}\left(P_{k-1}^{a}\right)^{-1}+hG^TC^{-1}G\notag\\
&\leq \frac{1}{(1-h\|A\|)^{2k}}\left(P_0^{a}\right)^{-1} + \left(\sum_{j=0}^{k-1}\frac{1}{(1-h\|A\|)^{2j}}\right)hG^TC^{-1}G.
\end{align}
\noindent
For any $h<h^*$ and any $0\leq j \leq L$ observe that it holds
\[\frac{1}{(1-h\|A\|)^{2j}} \leq e^{2T\frac{\|A\|}{1-h^*\|A\|}} =: \alpha_T,\]
thus
\begin{equation}
\left(P_k^{a}\right)^{-1} \leq \alpha_T\left(P_0^{a}\right)^{-1} + T\alpha_T G^TC^{-1}G
\end{equation}
\noindent
which yields the bound
\begin{equation*}
\left\|\left(P_k^{a}\right)^{-1}\right\| \leq \alpha_T \left\|\left(P_0^{a}\right)^{-1}\right\| + T\alpha_T \|G\|^2\lambda_{+}(C^{-1}) =: p_T^{a,*}.
\end{equation*}
On (\ref{LemmaForth}): it holds by using (\ref{LemmaThird})
\begin{equation}
\begin{aligned}
\left\|P_k^{f}\right\| &\leq \left(1+2h\|A\|+h^2\|A\|^2\right)\left\|P_{k-1}^{a}\right\|+h\|Q\| + O(h^2)
\end{aligned}
\end{equation}
thus by (\ref{LemmaFirst}) and a Gronwall argument we obtain that $\|P_k^{f}\|$ is bounded uniformly in $k$.
\end{proof}
\end{document}